\let\wfs@comment@comment\comment
\let\comment\@undefined
\let\wfs@changes@comment\comment
\let\comment\@undefined
\newcommand\comment{%
    \ifthenelse{\equal{\@currenvir}{comment}}
    {\wfs@comment@comment}
    {\wfs@changes@comment}%
}
\newtheorem{thm}{Theorem}[section]
\newtheorem{prop}[thm]{Proposition}
\newtheorem{cor}[thm]{Corollary}
\newtheorem*{Main}{Main Theorem}
\newtheorem{rem}[thm]{Remark}
\title{On Permutation Trinomials of the type $X^{q^2-q+1}+AX^{q^2}+BX$ over $\mathbb{F}_{q^3}$}
\author{Daniele Bartoli\thanks{Dipartimento di Matematica e Informatica, Universit\`a degli Studi di Perugia,  Perugia, Italy. daniele.bartoli@unipg.it}, and Francesco Ghiandoni\thanks{Dipartimento di Matematica e Informatica ``Ulisse Dini", Universit\`a  degli studi di Firenze, Firenze, Italy, francesco.ghiandoni@unifi.it}}
\begin{document}

\maketitle
\begin{abstract} 
Necessary and sufficient conditions on $A,B\in \mathbb{F}_{q^3}^*$ for $f(X)=X^{q^2-q+1}+AX^{q^2}+BX$ being a permutation polynomial of $\mathbb{F}_{q^3}$ are investigated via a connection with algebraic varieties over finite fields. 
\end{abstract}

\noindent {\bf MSC:} 05A05 11T06 11T55\\
\noindent {\bf Keywords:} Permutation polynomials, algebraic varieties, finite fields, permutation trinomials

\section{Introduction}\label{sec:intro}
Let $q$ be a prime power and denote by $\mathbb{F}_q$ the finite field with $q$ elements. A polynomial $f(x)\in  \mathbb{F}_q[x]$ is a \emph{permutation polynomial} (PP) of $\mathbb{F}_q$ if it is a bijection of  $\mathbb{F}_q$ into itself. Permutation polynomials were first studied by Hermite and Dickson; see \cite{MR1502214,hermit}.

 
In general,  simple structures or additional extraordinary properties are  required by applications of PPs in other areas of mathematics and engineering, such as cryptography, coding theory, or combinatorial designs. In this case, permutation polynomials meeting these criteria are usually difficult to find. For a deeper introduction to the connections of PPs with other fields of mathematics we refer to \cite{MR1405990,MR3293406,MR3087321} and the references therein. 

Permutation polynomials of monomial or binomial type have been widely investigated in the last decades. Although much less is known for polynomials having more than 2 terms, specific families of trinomials and quadrinomials were considered; see for instance  \cite{MR4014383, MR4214973, MR4186036, MR4104437, MR3925610, https://doi.org/10.48550/arxiv.2203.04216, MR4488068, hermit, MR4183827, MR4033170, MR4109896, MR3293406, MR4192803, MR3895827, MR4296215, MR4550904, MR3835238, MR2495253, zieve, MR4154425, MR4142308, MR4079332, MR4048039, MR4014382, MR3990981, MR3975902, MR3959342, MR3928513, MR3925616, MR3874713, MR3847017, MR3834890, MR3813096, MR3807840, MR3798783, MR3764685, MR3759797, MR3759789, MR3724861, MR3682991, MR3681077, MR3655747, MR3631351, MR3592813, MR3575462, MR3528704, MR3376621, MR3368797, MR3329981, MR3299134, MR3283622, MR3162811},  where in most of the cases $q$ is a square. \\
When investigating the permutational properties of a polynomial, a fruitful connection with algebraic curves is provided by the following observation (we refer to \cite{MR1042981} for a comprehensive introduction on algebraic curves). Given a polynomial $f(x)\in \mathbb{F}_q[x]$, let us consider the curve $\mathcal{C}_f$ with affine equation 
\begin{equation}\label{Eq:C_f}
\mathcal{C}_f : \frac{f(X)-f(Y)}{X-Y}=0.
\end{equation}
 A standard approach to the problem of deciding whether $f(x)$ is a PP is the investigation of the set of $\mathbb{F}_q$-rational points of $\mathcal{C}_f$. In fact, if $a\neq b$ are two distinct elements of $\mathbb{F}_q$ such that $f(a)=f(b)$, then the $\mathbb{F}_q$-rational point $(a,b)$ belongs to $\mathcal{C}_f$ and does not lie on the line $X-Y=0$. On the other hand, if $(a,b)$ is an $\mathbb{F}_q$-rational point of $\mathcal{C}_f$ not lying on $X-Y=0$, then $f(a)=f(b)$ and so $f(x)$ is not a PP. 
 
In general to decide whether an algebraic curve $\mathcal{C}_f$  defined over $\mathbb{F}_q$ possesses suitable $\mathbb{F}_q$ rational points is a hard task. A standard way to bypass such a problem is to prove the existence of absolutely irreducible components defined over $\mathbb{F}_q$ of $\mathcal{C}$ and then apply to such a component the celebrated Hasse-Weil Theorem (or refinements of it); see \cite{MR4109896}.

Unfortunately, this machinery fails whenever the degree of $\mathcal{C}$ is too large with respect to $q$, namely larger than roughly $\sqrt[4]{q}$. 

To overcome such a problem, for specific families of polynomials of large degree, one can investigate higher dimensional varieties attached to the starting polynomial ${f}$.

In this paper we consider polynomials of the form 
\begin{equation}
    f_{A,B}(X)=X^{q^2-q+1}+AX^{q^2}+BX \in \mathbb{F}_{q^3}[X],
\end{equation}
where $A,B$ are nonzero elements of $\mathbb{F}_{q^3}$ and $q$ is even.
Such a family has been partially investigated in \cite{MR4296215}, where the authors proved the following result.
\begin{thm} \label{thm lang xu ecc}
If $A^{q^2+q+1}=B^{1+q+q^2}$ and $A^qB \in \mathbb{F}_q \setminus \{0,1\},$ then $f_{A,B}(X)=X^{q^2-q+1}+AX^{q^2}+BX$ is a PP  over $\mathbb{F}_{q^3}.$
\end{thm}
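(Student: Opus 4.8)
The plan is to convert the permutation question into one about the norm-one subgroup of $\mathbb{F}_{q^3}^*$, to reduce to a single normalized case, and then to settle that case by a direct (but delicate) analysis. \emph{Step 1 (reduction to $U=\mu_{q^2+q+1}$).} The three exponents of $f_{A,B}$ differ by multiples of $q-1$, so $f_{A,B}(X)=X\,h(X^{q-1})$ with $h(Y)=AY^{q+1}+Y^q+B\in\mathbb{F}_{q^3}[Y]$. Since $(q^3-1)/(q-1)=q^2+q+1$, the classical criterion for polynomials of the shape $X^r h(X^s)$ (Park--Lee / Wan--Lidl / Akbary--Ghioca--Wang) applies: $f_{A,B}$ permutes $\mathbb{F}_{q^3}$ if and only if $h$ has no zero on $U:=\mu_{q^2+q+1}$ and $g(\zeta):=\zeta\,h(\zeta)^{q-1}$ permutes $U$. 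So it suffices to prove these two statements.

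\emph{Step 2 (normalization).} The assumption $A^{q^2+q+1}=B^{q^2+q+1}$ says exactly that $\lambda:=A/B$ lies in $U$. Substituting $X\mapsto\delta X$ (which preserves the PP property) replaces $(A,B)$ by $(Au,Bu^{-q})$ with $u=\delta^{q-1}$ arbitrary in $U$; as $\gcd(q+1,q^2+q+1)=1$, the map $u\mapsto u^{q+1}$ is an automorphism of $U$, so choosing $u$ with $u^{q+1}=\lambda^{-1}$ we reach $A=B=:a$. Then $a^{q+1}=A^qB\in\mathbb{F}_q\setminus\{0,1\}$, and since $a^{q+1}\in\mathbb{F}_q$ forces $a^{q^2-1}=1$ and hence (with $a^{q^3-1}=1$ and $\gcd(q^2-1,q^3-1)=q-1$) $a^{q-1}=1$, we get $a\in\mathbb{F}_q^*$ and $a^2=a^{q+1}\neq1$, i.e.\ $a\neq1$. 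Thus it is enough to show that $X^{q^2-q+1}+a(X^{q^2}+X)$ permutes $\mathbb{F}_{q^3}$ for every $a\in\mathbb{F}_q^*\setminus\{1\}$. Using $\zeta^{q^2}=\zeta^{-q-1}$ on $U$ one computes $h(\zeta)^q=\zeta^{-q-1}(a\zeta^{q+1}+a\zeta^q+1)$, hence
\[
g(\zeta)=\zeta^{-q}\,\frac{a\zeta^{q+1}+a\zeta^q+1}{a\zeta^{q+1}+\zeta^q+a};
\]
indeed the three cubics $F_1=a\zeta^{q+1}+a\zeta^q+1$, $F_2=a\zeta^{q+1}+\zeta^q+a$, $F_3=\zeta^{q+1}+a\zeta^q+a$ satisfy $F_1=\zeta^{q+1}F_2^{\,q}$, $F_3=\zeta^{q+1}F_1^{\,q}$, $F_2=\zeta^{q+1}F_3^{\,q}$ on $U$, from which $N(g(\zeta))=1$ and $g$ commutes with the Frobenius $\zeta\mapsto\zeta^q$.

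\emph{Step 3 ($h$ is zero-free on $U$).} If $h(\zeta)=0$ then $a\zeta+1\neq0$ and $\zeta^q=-a/(a\zeta+1)$, so $\zeta\mapsto\zeta^q$ acts on $\zeta$ as an $\mathbb{F}_q$-rational M\"obius transformation $M$ of matrix depending only on $a$; thus $\zeta=\zeta^{q^3}=M^3(\zeta)$. Since $a\neq0,1$, a short computation shows $M^3$ is not scalar, so its (at most two) fixed points lie in $\mathbb{F}_{q^2}$, forcing $\zeta\in\mathbb{F}_{q^2}\cap\mathbb{F}_{q^3}=\mathbb{F}_q$, i.e.\ $\zeta\in\mathbb{F}_q^*\cap U$. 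But $h(1)=2a+1=1\neq0$ (here parity of $q$ is essential: for $q$ odd $h(1)$ can vanish, and indeed the statement fails), and if $\zeta\neq1$ has order $3$ (which requires $q\equiv1\bmod3$) then $\zeta^q=\zeta$ and $h(\zeta)=(a+1)\zeta\neq0$. Hence $h$ never vanishes on $U$.

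\emph{Step 4 ($g$ permutes $U$ --- the main obstacle).} As $U$ is finite it suffices to show $g$ is injective (Frobenius-equivariance moreover lets one argue orbit by orbit). Writing $g(\zeta)=g(\eta)$ and clearing denominators yields the polynomial identity $\eta^q\,F_1(\zeta)\,F_2(\eta)=\zeta^q\,F_1(\eta)\,F_2(\zeta)$; its two sides agree on the diagonal, so their difference is $(\zeta-\eta)\,Q(\zeta,\eta)$ for an explicit $Q$, and everything reduces to showing that $Q$ has no zero with $\zeta,\eta\in U$ and $\zeta\neq\eta$. This is where I expect essentially all the work to lie. I would attack it by adjoining to $Q(\zeta,\eta)=0$ its Frobenius conjugates (kept small via the cyclic relations among $F_1,F_2,F_3$ from Step 2) together with $\zeta^{q^2+q+1}=\eta^{q^2+q+1}=1$, and showing the resulting system is inconsistent unless $a=1$. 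A cleaner repackaging: since $N(g(\zeta))=1$, the value $g(\zeta)$ is pinned down by $\mathrm{Tr}(g(\zeta))$ and $\mathrm{Tr}(g(\zeta)^{-1})$, which one checks are rational functions of $\mathrm{Tr}(\zeta)$ and $\mathrm{Tr}(\zeta^{-1})$ alone; injectivity of $g$ then follows from injectivity of the induced self-map on these pairs of traces, which holds precisely because $a\neq1$. Either way the theorem reduces to a concrete, $a$-dependent algebraic identity whose only degeneration is the excluded value $a=1$; carrying that identity out --- and verifying along the way that the parity of $q$ is what eliminates the remaining bad parameters in the non-reduced setting --- is the heart of the proof.
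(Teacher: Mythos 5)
This theorem is not proved in the paper at all: it is quoted verbatim from \cite{MR4296215} (the reference behind the label), so there is no in-paper argument to compare yours with. Judged on its own terms, your reduction in Steps 1--3 is correct and is the standard route: writing $f_{A,B}(X)=X\,h(X^{q-1})$ with $h(Y)=AY^{q+1}+Y^q+B$ and invoking the $X^r h(X^s)$ criterion is right; the normalization to $A=B=a\in\mathbb{F}_q^*\setminus\{1\}$ via $X\mapsto\delta X$ is valid (the hypotheses are invariant under $(A,B)\mapsto(Au,Bu^{-q})$ and $\gcd(q+1,q^2+q+1)=1$); and the M\"obius-transformation argument showing $h$ has no zero on $\mu_{q^2+q+1}$ checks out, including the computation that $M^3$ is scalar only for $a\in\{0,1\}$ and the evaluation $h(1)=1$, $h(\zeta)=(a+1)\zeta$ at the cube roots of unity.

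The genuine gap is Step 4, which is the entire content of the theorem. Showing that $g(\zeta)=\zeta^{-q}F_1(\zeta)/F_2(\zeta)$ permutes $\mu_{q^2+q+1}$ is exactly the hard part, and you do not prove it: you reduce it to ``$Q(\zeta,\eta)$ has no off-diagonal zero on $U\times U$'' and then state that you ``expect essentially all the work to lie'' there, offering two unexecuted strategies. Neither is a proof. The first (adjoining Frobenius conjugates of $Q=0$ and showing inconsistency unless $a=1$) is precisely the kind of point-counting/elimination problem on a high-degree curve that is nontrivial here --- indeed the surrounding paper exists because the naive curve $\mathcal{C}_{f_{A,B}}$ has degree $q^2-1\gg q^{3/4}$ and Hasse--Weil cannot rule out rational points, so ``showing the system is inconsistent'' cannot be waved through. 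The second (trace repackaging) contains two unverified claims: that $\mathrm{Tr}(g(\zeta))$ and $\mathrm{Tr}(g(\zeta)^{-1})$ depend only on $\mathrm{Tr}(\zeta)$, $\mathrm{Tr}(\zeta^{-1})$, and that the induced map on trace pairs is injective for $a\neq1$; moreover, even granting both, determining $g(\zeta)$ by its characteristic polynomial only pins it down up to Frobenius conjugacy, so injectivity of $g$ itself would still need an extra argument. As written, the proposal establishes the easy half (no zeros of $h$ on $U$) and leaves the permutation property of $g$ as a conjecture with a plan of attack.
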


Exploiting a connection with algebraic surfaces (see Section \ref{Section:Connection}) and generalizations of Hasse-Weil Theorem to higher dimensions, we are able to prove that, for $q$ large enough and apart from a few exceptions, the conditions on parameters $A,B$ expressed in Theorem \ref{thm lang xu ecc} are also sufficient.
\begin{Main}
Let $q=2^m,$ $m \geq 19.$ Then $f_{A,B}=X^{q^2-q+1}+AX^{q^2}+BX \in \mathbb{F}_{q^3}[X]$ is a PP over $\mathbb{F}_{q^3}[X]$ if and only if one of the following conditions holds:
\begin{itemize}
    \item $A^{1+q+q^2}=B^{1+q+q^2}$ and $A^qB \in \mathbb{F}_q \setminus \{0,1\};$
    \item $A^qB=1$ and $B^{1+q+q^2}\neq 1.$
\end{itemize}
\end{Main}

\section{Preliminaries}
Let $q=2^m,$ $m \in \mathbb{N},$ and denote by $\mathbb{F}_q$ the finite field with $q$ elements.
As a notation, $\mathbb{P}^r(\mathbb{F}_q)$ and $\mathbb{A}^{r}(\mathbb{F}_{q})$ (or $\mathbb{F}_{q}^r$) denote the projective and the affine space of dimension $r\in \mathbb{N}$ over the finite field $\mathbb{F}_{q}$ respectively. A variety, and more specifically a curve or a surface (i.e. a variety of dimension 1 or 2 respectively),
 is described by a certain set of equations with coefficients in  $\mathbb{F}_{q}.$ We say that a variety $\mathcal{V}$ is \textit{absolutely irreducible} if there are no varieties $\mathcal{V}'$
 and $\mathcal{V}''$ defined over the algebraic closure of $\mathbb{F}_q$ (denoted by $\overline{\mathbb{F}_q}$) and different
from $\mathcal{V}$ such that $\mathcal{V}=\mathcal{V}' \cup \mathcal{V}''$. If a variety $\mathcal{V} \subseteq  \mathbb{P}^r(\mathbb{F}_q)$ is defined by
$F_i(X_0, \dots , X_r) = 0,$ for $i = 1, \dots  s,$ an $\mathbb{F}_q$-rational point of $\mathcal{V}$ is a point
$(x_0 : \dots : x_r) \in \mathbb{P}^r(\mathbb{F}_q)$ such that $F_i(x_0, \dots, x_r) = 0,$ for $i = 1, \dots , s.$  The set of the $\mathbb{F}_q$-rational points of $\mathcal{V}$ is usually
denoted by $\mathcal{V}(\mathbb{F}_q).$ If $s=1,$ $\mathcal{V}$ is called a hypersurface and it is absolutely irreducible if the corresponding polynomial $F(X_1,\dots,X_r)$ is absolutely irreducible, i.e. it possesses no non-trivial factors over $\overline{\mathbb{F}_q}.$
Moreover, we say that $\mathcal{V}$ is a variety of degree $d$ (and write $\deg(\mathcal{V})=d$) if $d=\#(\mathcal{V}\cap H),$ where $H \subseteq\mathbb{P}^r(\overline{\mathbb{F}_q}) $ is a general projective subspace of dimension $r-s.$  To determine the degree of a variety is generally not straighforward; however an upper bound to $\deg(\mathcal{V)}$ is given by $\prod_{i=1}^{s}\deg(F_i).$ 
We also recall that the Frobenius map $\Phi_q: x \mapsto x^q$ is an automorphism of $\mathbb{F}_{q^3}$ and generates the group $Gal(\mathbb{F}_{q^3} / \mathbb{F}_q)$ of automorphisms of  $\mathbb{F}_{q^3}$ that fixes $\mathbb{F}_{q}$ pointwise.
The Frobenius automorphism induces also a collineation of $\mathbb{P}^r(\mathbb{F}_q^3)$ and an automorphism of $\mathbb{F}_{q^3}[X_0,\dots,X_r].$ \\
A crucial point in our investigation of permutation trinomials over $\mathbb{F}_{q^3}$ is to prove the existence of suitable $\mathbb{F}_q$-rational points in algebraic surfaces $\mathcal{V}$ attached to each permutation trinomial. This is reached by proving the existence of absolutely irreducible $\mathbb{F}_q$-rational components in $\mathcal{V}$ and lower bounding the number of their $\mathbb{F}_q$-rational points. To this end, generalizations of Lang-Weil type bounds
for algebraic varieties are needed.
To ensure the existence of a suitable $\mathbb{F}_q$-rational point of $\mathcal{V},$ we report the following
result.
\begin{thm}\cite[Theorem 7.1]{MR2206396} \label{thm lang weil versione tredici terzi}
    Let $V \subseteq \mathbb{A}^n(\mathbb{F}_q)$ be an absolutely irreducible variety
defined over $\mathbb{F}_q$ of dimension $r  > 0$ and degree $\delta.$ If $q > 2(r + 1)\delta^2,$ then the following
estimate holds:
\begin{equation}
  \big|\#(\mathcal{V}( \mathbb{A}^n(\mathbb{F}_q))) - q^r
\big| \leq (\delta - 1)(\delta - 2)q^{r-1/2} + 5\delta^{13/3}q^{r-1}.  
\end{equation}
\end{thm}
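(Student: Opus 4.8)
The plan is to prove this estimate by induction on the dimension $r$, following the elementary ``curve-section'' strategy that avoids Deligne's machinery and thereby produces fully explicit constants. The base case $r=1$ rests on the Hasse--Weil bound for absolutely irreducible curves; the inductive step reduces an $r$-dimensional variety to a family of $(r-1)$-dimensional slices cut out by a generic pencil of parallel hyperplanes, and the hypothesis $q>2(r+1)\delta^2$ is precisely what guarantees that enough ``good'' $\mathbb{F}_q$-rational slices exist at every stage. After a preliminary normalization (replacing the ambient $\mathbb{A}^n$ by the linear span of $V$, so that $V$ lies in no hyperplane) one may assume the projections below are generically finite.

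First I would establish the base case. Let $V\subseteq\mathbb{A}^n(\mathbb{F}_q)$ be an absolutely irreducible curve of degree $\delta$. A generic linear projection onto a coordinate plane yields an absolutely irreducible affine plane curve $C$ of degree at most $\delta$, and the projection is birational onto its image, so $\#V(\mathbb{F}_q)$ differs from $\#C(\mathbb{F}_q)$ only by the contribution of the finitely many exceptional fibres, a quantity polynomially bounded in $\delta$. For $C$ I would pass to its smooth projective model $\widetilde{C}$, of geometric genus $g\le\binom{\delta-1}{2}$, and invoke Hasse--Weil, $\bigl|\#\widetilde{C}(\mathbb{F}_q)-(q+1)\bigr|\le 2g\sqrt{q}\le(\delta-1)(\delta-2)\sqrt{q}$, which supplies the main term. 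The remaining discrepancies between $\widetilde{C}(\mathbb{F}_q)$ and the affine count $\#V(\mathbb{F}_q)$ — points at infinity, singular points, points of $\widetilde{C}$ over singularities, and the exceptional fibres of the projection — are each bounded polynomially in $\delta$ and absorbed into the lower-order term $5\delta^{13/3}$.

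For the inductive step, assume the estimate in dimension $r-1$ and let $V$ have dimension $r\ge 2$. I would choose a generic $\mathbb{F}_q$-linear form $\ell$ and partition $V(\mathbb{F}_q)$ over the fibres $V_t:=V\cap\{\ell=t\}$, $t\in\mathbb{F}_q$, so that $\#V(\mathbb{F}_q)=\sum_{t\in\mathbb{F}_q}\#V_t(\mathbb{F}_q)$. An effective Bertini-type theorem guarantees that, for all but at most $c(\delta)$ values of $t$ (with $c(\delta)$ polynomial in $\delta$ and independent of $q$), the slice $V_t$ is absolutely irreducible of dimension $r-1$ and degree $\delta$. For each such good $t\in\mathbb{F}_q$ the inductive hypothesis gives $\#V_t(\mathbb{F}_q)=q^{r-1}+E_t$ with $|E_t|\le(\delta-1)(\delta-2)q^{r-3/2}+5\delta^{13/3}q^{r-2}$, and summing over the roughly $q$ good values reproduces the leading term $q^{r}$ together with an error of exactly the claimed order $(\delta-1)(\delta-2)q^{r-1/2}+5\delta^{13/3}q^{r-1}$. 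The deficit from the missing good parameters and the contribution of the $O(c(\delta))$ degenerate slices are each $O(\mathrm{poly}(\delta)\,q^{r-1})$ and are likewise absorbed, provided $q>2(r+1)\delta^2$ so that the good $\mathbb{F}_q$-rational parameters are genuinely of size $\approx q$ and the inductive estimate is non-vacuous.

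The main obstacle is the \emph{effective} Bertini theorem underlying the inductive step: one must bound explicitly, and uniformly in $q$, the number of hyperplanes of the pencil for which $V_t$ fails to be absolutely irreducible, of the expected dimension, or of the expected degree, and show this ``bad'' set is small enough (polynomial in $\delta$) that a generic $\mathbb{F}_q$-rational choice survives under $q>2(r+1)\delta^2$. Equally delicate is the bookkeeping of constants: the exponent $13/3$ and factor $5$ are not incidental but arise from propagating the curve-level error through the dimension recursion and through the generic-projection estimates, and checking that they do not degrade as $r$ grows is the most technical component of the argument.
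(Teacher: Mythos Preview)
The paper does not prove this theorem at all: it is quoted verbatim as a preliminary tool from Cafure and Matera, \emph{Improved explicit estimates on the number of solutions of equations over a finite field} (cited as \cite[Theorem~7.1]{MR2206396}), and no argument is given. So there is no ``paper's own proof'' to compare against; the authors simply invoke the result as a black box when they need to guarantee enough $\mathbb{F}_q$-rational points on the surface $\mathcal{V}_1$.

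That said, your sketch is broadly faithful to the strategy of the original Cafure--Matera proof: they do proceed by induction on the dimension, using an effective Bertini-type theorem to control the number of bad hyperplane sections and reducing the base case to an explicit form of the Aubry--Perret / Hasse--Weil bound for curves. The delicate parts you flag --- the explicit constant in the effective Bertini step and the propagation of the $5\delta^{13/3}$ term through the recursion --- are exactly where the work lies in their paper, and your outline correctly identifies them without resolving them. If your goal were to reprove the theorem, you would need to supply those quantitative Bertini bounds (this is the content of several sections in Cafure--Matera and is not a triviality); as a proposal for the present paper, however, no proof is required since the result is merely cited.
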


\section{Connection with algebraic surfaces}\label{Section:Connection}
\begin{comment}
 Let $C_{f_{A,B}}$ be the curve of affine equation $(f_{A,B} (X) - f_{A,B} (Y))/(X - Y) = 0.$ It is well-known 
that $C_{f_{A,B}}$ is defined over $\mathbb{F}_{q^3},$ and that $f_{A,B}$ is a bijection over $\mathbb{F}_{q^3}$ if and only if $C_{f_{A,B}}$ has no
$\mathbb{F}_{q^3}$-rational points off the line $X=Y.$
However, in order to obtain a curve defined by a polynomial with a cubic dependence on $X,X^q,X^{q^2},Y,Y^q,Y^{q^2},$ we consider the curve $\mathcal{D}_{f_{A,B}} = \mathcal{C}_{f_{A,B}} \cup \{ X^qY^q=0\}. $   
\end{comment}
The following proposition gives the explicit link between permutation polynomial and algebraic curves.
\begin{prop} \label{prop collegam curve}
If $f_{A,B}(X)=X^{q^2-q+1}+AX^{q^2}+BX \in \mathbb{F}_{q^3}[X]$ is a PP on $\mathbb{F}_{q^3}$ then the curve $$ \mathcal{C}_{f_{A,B}}:\varphi_{f_{A,B}}:= \dfrac{Y^q(X^{q^2+1}+AX^{q^2+q}+BX^{q+1})+X^q(Y^{q^2+1}+AY^{q^2+q}+BY^{q+1})}{X^qY^q(X+Y)}=0 $$ has no affine $\mathbb{F}_{q^3}$-rational points off $XY(X+Y)=0.$
Moreover, if $f(X)$ has no trivial roots in $\mathbb{F}_{q^3},$ then  the converse is also true.
\end{prop}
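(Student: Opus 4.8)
The plan is to unwind the standard dictionary between permutations and the curve $\mathcal{C}_f$ described in the introduction (equation~\eqref{Eq:C_f}), but to keep track carefully of the extra factors $X^q$, $Y^q$ that have been cleared from the denominator. First I would observe that, since $q$ is even, the map $x \mapsto x^q$ is a bijection of $\mathbb{F}_{q^3}$, hence for $x \in \mathbb{F}_{q^3}$ we have $x^{q^2-q+1} = x^{q^2}\cdot x \cdot x^{-q}$ whenever $x \neq 0$; multiplying through shows that on nonzero arguments $f_{A,B}(x) = \bigl(x^{q^2+1}+Ax^{q^2+q}+Bx^{q+1}\bigr)/x^q$. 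Thus for nonzero $a,b \in \mathbb{F}_{q^3}$ the equality $f_{A,B}(a)=f_{A,B}(b)$ is equivalent to
\[
b^q\bigl(a^{q^2+1}+Aa^{q^2+q}+Ba^{q+1}\bigr) = a^q\bigl(b^{q^2+1}+Ab^{q^2+q}+Bb^{q+1}\bigr),
\]
which, since the characteristic is $2$, is exactly the vanishing of the numerator of $\varphi_{f_{A,B}}$ evaluated at $(X,Y)=(a,b)$. This identifies the numerator $N(X,Y)$ of $\varphi_{f_{A,B}}$ with $X^q Y^q\bigl(f_{A,B}(X)-f_{A,B}(Y)\bigr)$ on the torus $XY\neq 0$, up to the substitution $x^{q^2-q+1}\leftrightarrow x^{q^2+1}/x^q$.

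Next I would prove the forward implication. Suppose $f_{A,B}$ is a PP of $\mathbb{F}_{q^3}$ and let $(a,b)$ be an affine $\mathbb{F}_{q^3}$-rational point of $\mathcal{C}_{f_{A,B}}$ with $ab(a+b)\neq 0$. Since $ab\neq 0$ and $a\neq b$ (because $q$ is even, $a+b=0 \iff a=b$), the point $(a,b)$ lies on $N(X,Y)=0$ but off the three excluded lines, so $N(a,b)=0$ with $a^q b^q\neq 0$; dividing gives $f_{A,B}(a)=f_{A,B}(b)$ with $a\neq b$, contradicting injectivity. Hence $\mathcal{C}_{f_{A,B}}$ has no affine $\mathbb{F}_{q^3}$-rational point off $XY(X+Y)=0$, which is the asserted conclusion. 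For the converse, assume in addition that $f_{A,B}$ has no nontrivial root, i.e. $f_{A,B}(x)=0 \Rightarrow x=0$ for $x\in\mathbb{F}_{q^3}$ (note $f_{A,B}(0)=0$ always), and suppose for contradiction $f_{A,B}$ is not a PP. Then there exist distinct $a,b\in\mathbb{F}_{q^3}$ with $f_{A,B}(a)=f_{A,B}(b)=:c$. The only remaining task is to show we can choose such a collision with $ab(a+b)\neq 0$: since $a\neq b$ we have $a+b\neq 0$ automatically, so it remains to rule out that \emph{every} collision involves $0$. If $c\neq 0$ then neither $a$ nor $b$ can be $0$ (as $f_{A,B}(0)=0\neq c$), and we are done. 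If $c=0$, then $f_{A,B}(a)=0$ forces $a=0$ by hypothesis and likewise $b=0$, contradicting $a\neq b$; so in fact no collision has value $0$. In either case $(a,b)$ is an affine $\mathbb{F}_{q^3}$-rational point of $\mathcal{C}_{f_{A,B}}$ off $XY(X+Y)=0$, contradiction; hence $f_{A,B}$ is a PP.

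I expect the only genuinely delicate point to be the bookkeeping in the converse: one must be careful that the passage from $f_{A,B}(x)$ to the cleared form $N(x,x^?)/x^q$ is valid only for $x\neq 0$, and that collisions at the value $0$ (which correspond exactly to nontrivial roots of $f_{A,B}$) are precisely what the extra hypothesis excludes — this is the reason the converse requires "$f_{A,B}$ has no trivial roots in $\mathbb{F}_{q^3}$". A secondary, purely formal point is to justify that $\varphi_{f_{A,B}}$ indeed has polynomial numerator divisible by $X+Y$ (so that $\mathcal{C}_{f_{A,B}}$ is an honest curve and not merely a rational locus); this follows because $N(X,Y)$ is symmetric and vanishes on $X=Y$ identically in characteristic $2$, or alternatively by the same computation that identifies $N(X,Y)/\bigl(X^qY^q(X+Y)\bigr)$ with the usual Cauchy quotient $(f_{A,B}(X)-f_{A,B}(Y))/(X-Y)$ after clearing $X^qY^q$, and I would record this divisibility explicitly before running the two implications.
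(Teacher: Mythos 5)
Your proof is correct and follows essentially the same route as the paper: identify the numerator of $\varphi_{f_{A,B}}$ with $X^qY^q\bigl(f_{A,B}(X)+f_{A,B}(Y)\bigr)$ on the locus $XY\neq 0$ and translate injectivity into non-vanishing. The paper dispatches the converse with a one-line ``the converse also holds,'' whereas you correctly spell out the only delicate point, namely that the no-nontrivial-root hypothesis is exactly what excludes collisions at the value $0$ (which the curve cannot see, since they lie on $XY=0$).
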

\begin{proof}
 If $f_{A,B}$ is a PP on $\mathbb{F}_{q^3},$  then for all $x,y \in \mathbb{F}_{q^3},$ $x\neq 0 \neq y$ and $x \neq y,$ \[0 \neq x^qy^q(f_{A,B}(x)+f_{A,B}(y))=y^q(x^{q^2+1}+Ax^{q^2+q}+Bx^{q+1})+x^q(y^{q^2+1}+Ay^{q^2+q}+By^{q+1})\]
 and so $C_{f_{A,B}}$ has no affine $\mathbb{F}_{q^3}$-rational points off $XY(X+Y)=0.$
 The converse also holds.
\end{proof}
Unfortunately, $\deg(C_{f_{A,B}})=q^2-1 > \sqrt[4]{q^3}$ so Hasse-Weil type theorems do not ensure the existence of $\mathbb{F}_{q^3}$-rational points for the curve $C_{f_{A,B}}.$ \\
We overcome this problem by considering a link between $C_{f_{A,B}}$ and a suitable surface in $\mathbb{P}^5(\mathbb{F}_{q^3})$ of small degree,  in order to apply Theorem \ref{thm lang weil versione tredici terzi}. \\ 
Define $\mathcal{C}: X^qY^q(X+Y)\varphi_{f_{A,B}}(X,Y)=0.$
Let $\left\lbrace 	\xi,\xi^q,\xi^{q^2}\right\rbrace $ be a normal basis of $\mathbb{F}_{q^3}$ over $\mathbb{F}_{q}.$
Write \\$X=x_0\xi+x_1\xi^q+x_2\xi^{q^2}$ and $$Y=y_0\xi+y_1\xi^q+y_2\xi^{q^2}, \quad \textrm{where}  \quad x_i,y_i\in \mathbb{F}_q, \quad i=0,1,2.$$  Clearly, the invertible map   \begin{equation}
\Lambda: \mathbb{F}^2_{q^3} \rightarrow \mathbb{F}^6_{q}, \ \ \ \ \ \ \  (X,Y) \mapsto (x_0,x_1,x_2,y_0,y_1,y_2)
\end{equation} induces a surjective map between the set of affine $\mathbb{F}_{q^3}$-rational points of $\mathcal{C}$ and the set of projective $\mathbb{F}_{q}$-rational points of a suitable  surface $\mathcal{V} \subseteq \mathbb{P}^5(\mathbb{F}_q)$ of equation \[f_1(x_0,x_1,x_2,y_0,y_1,y_2)\xi+f_2(x_0,x_1,x_2,y_0,y_1,y_2)\xi^q+f_3(x_0,x_1,x_2,y_0,y_1,y_2)\xi^{q^2}=0,\] i.e.
\begin{equation*}
	\mathcal{V}:\left\{
	\begin{array}{l}
		f_1(x_0,x_1,x_2,y_0,y_1,y_2)=0\\ 
		f_2(x_0,x_1,x_2,y_0,y_1,y_2)=0\\
	   f_3(x_0,x_1,x_2,y_0,y_1,y_2)=0,
	\end{array}
	\right.  
\end{equation*}
where $f_i \in \mathbb{F}_q[x_0,x_1,x_2,y_0,y_1,y_2]$ and $\deg(f_i)=3,$ for $i=1,2,3.$ \\
Setting
\begin{eqnarray*}
 X&=&x_0\xi+x_1\xi^q+x_2\xi^{q^2}=:X_0   \hspace{2cm}   Y=y_0\xi+y_1\xi^q+y_2\xi^{q^2}=:Y_0 \\
 X^q&=&x_2\xi+x_0\xi^q+x_1\xi^{q^2}=:X_1  \hspace{2cm}   Y^q=y_2\xi+y_0\xi^q+y_1\xi^{q^2}=:Y_1\\
 X^{q^2}&=&x_1\xi+x_2\xi^q+x_0\xi^{q^2}=:X_2  \hspace{2cm}   Y^{q^2}=y_1\xi+y_2\xi^q+y_0\xi^{q^2}=:Y_2,
 \end{eqnarray*}
it follows that the map        $ \  \Theta : (x_0:x_1:x_2:y_0:y_1:y_2) \mapsto (X_0:X_1:X_2:Y_0:Y_1:Y_2)$ is a projectivity of $ \mathbb{P}^5(\mathbb{F}_{q^3}).$ 
Moreover the Frobenius automorphism $\Phi_q$ induces a collineation of $ \mathbb{P}^5(\mathbb{F}_{q^3}) $ that fixes $\mathcal{V},$ so the surface  $\Theta^{-1}(\mathcal{V})$ is fixed by the collineation $$\Psi_q :  \mathbb{P}^5(\mathbb{F}_{q^3}) \rightarrow \mathbb{P}^5(\mathbb{F}_{q^3}), \ \ \ \     (u_0:u_1:u_2:v_0:v_1:v_2)\mapsto (u_2^{q^2}:u_0^{q^2}:u_1^{q^2}:v_2^{q^2}:v_0^{q^2}:v_1^{q^2}).$$  We will denote by  $\Psi_q$ also the automorphism  of $\mathbb{F}_{q^3}[X_0,X_1,X_2,Y_0,Y_1,Y_2]$  given by $$H(X_0,X_1,X_2,Y_0,Y_1,Y_2) \mapsto H^q(X_1,X_2,X_0,Y_1,Y_2,Y_0),$$ where $H^q$
denotes the polynomial obtained raising the coefficients of $H$
to the power $q.$ \\ Observing that
\begin{eqnarray*}
    X^qY^q(X+Y)\varphi_{f_{A,B}}&=&Y^q(X^{q^2}X+AX^{q^2}X^q+BX^{q}X)+X^q(Y^{q^2}Y+AY^{q^2}Y^q+BY^{q}Y) \\ &=:&\overline{F}(X,X^q,X^{q^2},Y,Y^q,Y^{q^2}),
\end{eqnarray*} with  $\overline{F} \in \mathbb{F}_{q^3}[X_0,X_1,X_2,Y_0,Y_1,Y_2],$
we get that the surface $\Theta^{-1}(\mathcal{V})$ is defined by
\begin{equation*}
	\left\{
	\begin{array}{l}
		\overline{F}=0\\ 
		\Psi_q(\overline{F})=0\\
	   \Psi_q^2(\overline{F})=0,
	\end{array}
	\right.  
\end{equation*}
i.e.
\begin{equation} \label{eq varietà fissata da shift}
	\Theta^{-1}(\mathcal{V}):\left\{
	\begin{array}{l}
		Y_1(X_0X_2+AX_1X_2+BX_0X_1)=X_1(Y_0Y_2+AY_1Y_2+BY_0Y_1)\\ 
		Y_2(X_0X_1+A^qX_0X_2+B^qX_1X_2)=X_2(Y_0Y_1+A^qY_0Y_2+B^qY_1Y_2)\\
	   Y_0(X_1X_2+A^{q^2}X_0X_1+B^{q^2}X_0X_2)=X_0(Y_1Y_2+A^{q^2}Y_0Y_1+B^{q^2}Y_0Y_2).
	\end{array}
	\right.  
\end{equation}
Again the composition $$ \Theta \circ \Lambda : \mathbb{F}^2_{q^3} \rightarrow \mathbb{F}^6_{q}, \ \ \ \  (X,Y) \mapsto (X_0,X_1,X_2,Y_0,Y_1,Y_2)$$ induces a surjective map between the set of affine $\mathbb{F}_{q^3}$-rational points of $\mathcal{C}$ and the set of points of $\Theta^{-1}(\mathcal{V})$ in $\mathbb{P}^5(\mathbb{F}_{q^3})$ which are fixed by $\Psi_q .$  
Furthermore,  
one can observe that every component of $\Theta^{-1}(\mathcal{V})$ which is fixed by $\Psi_q$ corresponds to an $\mathbb{F}_q$-rational component of $\mathcal{V}$ and viceversa. \\
We are now interested in studying the existence of absolutely irreducible components of $\Theta^{-1}(\mathcal{V})$ which are fixed by $\Psi_q.$   \\
Clearly, from $\varphi_{f_{A,B}}(X,0)=\varphi_{f_{A,B}}(0,Y)=\varphi_{f_{A,B}}(X,X)=0$ one deduces that the planes 
\begin{equation}
    \mathcal{U}_1: \left\{
	\begin{array}{l}
	X_2=0\\ 
		X_1=0\\
	  X_0=0
	\end{array}
	\right. 
	 \ \ \ \ \ \ \ \ \mathcal{U}_2:\left\{
	\begin{array}{l}
	 Y_2=0\\ 
		Y_1=0 \\
	  Y_0=0
	\end{array}
	\right.
	\ \ \ \ \ \ \ \ \mathcal{U}_3:
	\left\{
	\begin{array}{l}
	 X_2=Y_2\\ 
		X_1=Y_1 \\
	  X_0=Y_0
	\end{array}
	\right.
	\end{equation}
	are absolutely irreducible components of $\Theta^{-1}(\mathcal{V}).$ It may be noticed that  points on $XY(X+Y)=0$ correspond via the map $\Theta \circ \Lambda$ to points on $\mathcal{U}_1\cup\mathcal{U}_2\cup \mathcal{U}_3$ and vice versa. \\
	It is not hard to see that also the surface
\begin{equation*}
	\mathcal{U}:\left\{
	\begin{array}{l}
		X_2=0\\ 
		X_0=0\\
	   Y_0Y_2+AY_1Y_2+BY_0Y_1=0
	\end{array}
	\right.  
\end{equation*}
is an absolutely irreducible component of $\Theta^{-1}(\mathcal{V}).$
Moreover, since $\varphi_{f_{A,B}}(X,Y)=\varphi_{f_{A,B}}(Y,X),$  $\Theta^{-1}(\mathcal{V}) $ is fixed by the projectivity $$\Sigma : (X_0,X_1,X_2,Y_0,Y_1,Y_2) \mapsto (Y_0,Y_1,Y_2,X_0,X_1,X_2) $$ over $\mathbb{P}^5(\mathbb{F}_{q^3}),$ as well as by $\Psi_q.$ Therefore $\mathcal{U},\Sigma(\mathcal{U}),\Psi_q(\mathcal{U}),\Sigma(\Psi_q(\mathcal{U})),\Psi_q^2(\mathcal{U}),\Sigma(\Psi_q^2(\mathcal{U}))$ are six distinct components of $\Theta^{-1}(\mathcal{V}) $ of degree 2, which are not fixed by $\Psi_q.$
Thus $\Theta^{-1}(\mathcal{V})$ splits in at least ten components
\begin{equation} \label{eq decomp of theta v}
     \Theta^{-1}(\mathcal{V})\supseteq \left(\mathcal{U}_1 \cup \mathcal{U}_2 \cup \mathcal{U}_3 \cup \mathcal{U}\cup \Sigma(\mathcal{U}) \cup \Psi_q(\mathcal{U}) \cup \Sigma(\Psi_q(\mathcal{U})) \cup \Psi_q^2(\mathcal{U})\cup \Sigma(\Psi_q^2(\mathcal{U})) \right) \cup \mathcal{W},
\end{equation}
where $\mathcal{W}$ is a surface fixed by $\Psi_q$ and of degree at most $27-3-12=12.$\\
From now on we will investigate the absolutely irreducibility of $\mathcal{W}.$
\begin{comment}
\begin{equation}
    \Theta^{-1}(\mathcal{V}): 
    \left\{
	\begin{array}{l}
	X_2=0\\ 
		X_1=0\\
	  X_0=0
	\end{array}
	\right. 
	\vee 
	\left\{
	\begin{array}{l}
	 Y_2=0\\ 
		Y_1=0 \\
	  Y_0=0
	\end{array}
	\right.
	\vee 
	\left\{
	\begin{array}{l}
	 X_2=Y_2\\ 
		X_1=Y_1 \\
	  X_0=Y_0
	\end{array}
	\right.
	\vee
	\bigcup_{i}
\end{equation}
\end{comment}

From the first equation of  System \eqref{eq varietà fissata da shift} we obtain
$X_2=X_1 \cdot \frac{Y_2(Y_0+AY_1)+BY_1(X_0+Y_0)}{Y_1(X_0+AX_1)}.$
\\ By replacing $X_2$ in the second equation of \eqref{eq varietà fissata da shift}, we get $X_1=0$ (from which we obtain $\mathcal{U}_1 \cup \Psi_{q^2}(\mathcal{U}$)) or $$ X_1=\dfrac{[Y_2(Y_0+AY_1)+BY_1(X_0+Y_0)][A^qY_2(X_0+Y_0)+Y_1(Y_0+B^qY_2)]+X_0^2Y_1Y_2}{Y_2[AX_0Y_1+B^q(Y_2(Y_0+AY_1)+BY_1(X_0+Y_0))]}. $$
Replacing $X_1$ and $X_2$ as rational functions in $X_0,Y_0,Y_1,Y_2$ in the last equation of the system, we obtain $X_0=Y_0$ (i.e. the component $\mathcal{U}_3$) or
\begin{comment}
\begin{equation*}
	\left\{
	\begin{array}{l}
	X_2=X_1 \cdot \frac{Y_2(Y_0+AY_1)+BY_1(X_0+Y_0)}{Y_1(X_0+AX_1)}\\ 
		X_1=\frac{[Y_2(Y_0+AY_1)+BY_1(X_0+Y_0)][A^qY_2(X_0+Y_0)+Y_1(Y_0+B^qY_2)]+X_0^2Y_1Y_2}{Y_2[AX_0Y_1+B^q(Y_2(Y_0+AY_1)+BY_1(X_0+Y_0))]}\\
	  G(X_0,Y_0,Y_1,Y_2)=0
	\end{array}
	\right.  
\end{equation*}
i.e (by replacing $X_1$ in the first equation)    
\end{comment}

\begin{equation*}
	\mathcal{W} : \left\{
	\begin{array}{l}
	X_2= \frac{[Y_2(Y_0+AY_1)+BY_1(X_0+Y_0)][A^qY_2(X_0+Y_0)+Y_1(Y_0+B^qY_2)]+X_0^2Y_1Y_2}{Y_1[(A^{q+1}+B^q)X_0Y_2+A^{q+1}Y_0Y_2+AB^qY_1Y_2+AY_0Y_1]}\\ 
		X_1=\frac{[Y_2(Y_0+AY_1)+BY_1(X_0+Y_0)][A^qY_2(X_0+Y_0)+Y_1(Y_0+B^qY_2)]+X_0^2Y_1Y_2}{Y_2[AX_0Y_1+B^q(Y_2(Y_0+AY_1)+BY_1(X_0+Y_0))]}\\
	  G(X_0,Y_0,Y_1,Y_2)=0,
	\end{array}
	\right.  
\end{equation*}
with $G \in \mathbb{F}_{q^3}[X_0,Y_0,Y_1,Y_2]$ homogeneous. 
By  MAGMA computations\cite{MR1484478}, we get $\deg(G(X_0,Y_0,Y_1,Y_2))=8$ and \begin{equation}
    G_*(X_0,Y_0,Y_1)=G(X_0,Y_0,Y_1,1)=\alpha(Y_0,Y_1)X_0^3+\beta(Y_0,Y_1)X_0^2+\gamma(Y_0,Y_1)X_0+\delta(Y_0,Y_1),
\end{equation} where $\alpha,\beta,\gamma,\delta \in \mathbb{F}_{q^3}[Y_0,Y_1].$\\
If $\alpha$ is not zero, then $\deg_{X_0}(G_*)=3$ and $G_*$ is not absolutely irreducible if and only if there exists a  factor  $\epsilon(Y_0,Y_1)X_0+\sigma(Y_0,Y_1)$ of $G_*,$ where $\epsilon(Y_0,Y_1) \mid \alpha(Y_0,Y_1)$ and $\sigma(Y_0,Y_1) \mid \delta(Y_0,Y_1).$
By  MAGMA computations, it results that
\begin{eqnarray*}
\alpha(Y_0,Y_1)&=&(A^qB+1)(A^{1+q+q^2}+AB^{q^2}+A^qB+A^{q^2}B^q+B^{1+q+q^2}+1)Y_0Y_1^2, \\
\delta(Y_0,Y_1)&=&(Y_0Y_1+B^qY_1Y_2+A^qY_0Y_2)_*^2(AY_1Y_2+BY_0Y_1+Y_0Y_2)_*^2=:M_*^2N_*^2.
\end{eqnarray*}
Thus $\epsilon(Y_0,Y_1)= Y_0^iY_1^j$ and $\sigma(Y_0,Y_1)=\lambda M_*^\ell N_*^k,$ with $0\leq i \leq 1 ,$ $0 \leq j,\ell,k \leq 2,$ for some $\lambda \in \mathbb{F}_{q^3} \! \setminus \! \{0\}.$ Also $\epsilon(Y_0,Y_1)X_0+\sigma(Y_0,Y_1) \mid G_*$ if and only if $G_*(\frac{\lambda M_*^\ell N_*^k}{Y_0^iY_1^j},Y_0,Y_1)=0$ in $\mathbb{F}_{q^3}(Y_0,Y_1),$ that is (clearing the  denominators)  \begin{equation} \label{eq alpha beta gamma delta}
 \alpha M_*^{3\ell}N_*^{3k}+\beta M_*^{2\ell}N_*^{2k}Y_0^i Y_1^j +\gamma M_*^{\ell}N_*^{k}Y_0^{2i} Y_1^{2j}+\delta Y_0^{3i} Y_1^{3j}=0   
\end{equation}   in $\mathbb{F}_{q^3}[Y_0,Y_1].$ 

We first investigate the case $\alpha=0$.
\begin{prop} \label{prop f PP se e solo se norma eq 1}
 Let  $A,B \in \mathbb{F}_{q^3}.$ Then $f_{A,B}$ is not a permutation polynomial over $\mathbb{F}_{q^3}$ in any of the following cases.
 \begin{itemize}
     \item [(i)] $A^qB+1=0$ and $A^{1+q+q^2}=1;$
     \item [(ii)] $A^qB+1 \neq 0$ and $A^{1+q+q^2}+AB^{q^2}+A^qB+A^{q^2}B^q+B^{1+q+q^2}+1=0.$
 \end{itemize}
\end{prop}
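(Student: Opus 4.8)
The plan is to prove this negative statement directly, by exhibiting in each case a nonzero $x_0\in\mathbb{F}_{q^3}$ with $f_{A,B}(x_0)=0=f_{A,B}(0)$, so that $f_{A,B}$ is not injective. (This is a different mechanism from Proposition~\ref{prop collegam curve}: the collision occurs at $(x_0,0)$, which lies on $XY(X+Y)=0$.) Everything rests on the factorization $f_{A,B}(X)=X\cdot h(X^{q-1})$ with $h(Z)=AZ^{q+1}+Z^q+B$. Since the image of $x\mapsto x^{q-1}$ on $\mathbb{F}_{q^3}^{*}$ is exactly the norm-one subgroup $\mu=\{z\in\mathbb{F}_{q^3}^{*}:z^{q^2+q+1}=1\}$, it suffices to produce $z_0\in\mu$ with $h(z_0)=0$ and then pick any $x_0$ with $x_0^{q-1}=z_0$, for which $f_{A,B}(x_0)=x_0\,h(z_0)=0$. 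Note also that if $z$ is a root of $h$ then $z\neq 1/A$ (since $h(1/A)=B\neq 0$), so $Az+1\neq 0$ and $h(z)=0$ is equivalent to $z^q=B/(Az+1)$.

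For case (i), $B=A^{-q}$ forces $h(Z)=A^{-q}\bigl((AZ)^{q+1}+(AZ)^q+1\bigr)$, so I would look for $z_0$ with $P(Az_0)=0$, where $P(W)=W^{q+1}+W^q+1$. The key observation is that \emph{every} root $w_0$ of $P$ already lies in $\mathbb{F}_{q^3}$ and has norm $1$: since $w_0\notin\{0,1\}$, the relation $P(w_0)=0$ gives $w_0^q=(w_0+1)^{-1}$, whence $w_0^{q^2}=(w_0+1)w_0^{-1}$ and then $w_0^{q^3}=w_0$, while $w_0\cdot w_0^q\cdot w_0^{q^2}=1$. As $\deg P=q+1\geq 3$, such a $w_0$ exists; setting $z_0=w_0/A$ and using $A^{1+q+q^2}=1$ yields $z_0\in\mu$ with $h(z_0)=0$, which finishes (i).

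For case (ii) I would take the candidate $z_0=(A^qB+1)/(A+B^{q+1})$, which is in fact forced: imposing simultaneously $z^q=B/(Az+1)$ (i.e.\ $h(z)=0$) and $z^{q^2}=(z\,z^q)^{-1}$ (i.e.\ $z^{q^2+q+1}=1$), and comparing with the independent identity $z^{q^2}=B^q/(A^qz^q+1)$, collapses to $z=z_0$. One first checks $A+B^{q+1}\neq 0$: otherwise $A=B^{q+1}$ gives $A^qB=B^{q^2+q+1}$, and the equation of (ii) reduces to $(B^{1+q+q^2}+1)^2=0$, forcing $A^qB=B^{1+q+q^2}=1$, which is excluded. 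Then a mechanical (MAGMA‑checkable) computation, expanding everything in the Galois conjugates $A^{q^i},B^{q^i}$ and simplifying in characteristic $2$, should yield
\[
h(z_0)=0 \iff A^{1+q+q^2}+AB^{q^2}+A^qB+A^{q^2}B^q+B^{1+q+q^2}+1=0,
\]
\[
z_0^{q^2+q+1}=1 \iff \bigl(B^{1+q+q^2}+1\bigr)\bigl(A^{1+q+q^2}+AB^{q^2}+A^qB+A^{q^2}B^q+B^{1+q+q^2}+1\bigr)=0 .
\]
Under the hypothesis of (ii) both right‑hand sides vanish, so $z_0\in\mu$ and $h(z_0)=0$, and as above $f_{A,B}(x_0)=0$ for a suitable $x_0\neq 0$. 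The main obstacle is precisely this pair of equivalences: one must write the norm $N\bigl((A^qB+1)/(A+B^{q+1})\bigr)$ and the condition $h(z_0)=0$ as symmetric expressions in $A^{q^i},B^{q^i}$ and recognize that, up to the harmless factor $B^{1+q+q^2}+1$, each collapses exactly to the displayed equation — a bookkeeping‑heavy but conceptually routine step, whereas case (i) is soft once one spots that $W^{q+1}+W^q+1$ splits over $\mathbb{F}_{q^3}$ with all roots of norm one.
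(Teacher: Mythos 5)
Your proof is correct and follows essentially the same route as the paper: in both cases one exhibits a nontrivial root of $f_{A,B}$ by producing a norm-one root of $h(Z)=AZ^{q+1}+Z^q+B$, using the splitting of $W^{q+1}+W^q+1$ over $\mathbb{F}_{q^3}$ in case (i) (which you prove directly where the paper cites it) and the forced candidate $z_0=(A^qB+1)/(A+B^{q+1})$ in case (ii). The two identities you defer to a mechanical check are correct as you state them (the paper carries out the $h(z_0)=0$ computation explicitly, and your factor $B^{1+q+q^2}+1$ in the norm identity is the right one, where the paper writes $B^{1+q+q^2}$), so nothing essential is missing.
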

\begin{proof}
We want to find a non-trivial root of $f_{A,B}$ in $\mathbb{F}_{q^3};$ the statement will follow.
\begin{itemize}
    \item [(i)] Clearly, $f_{A,B}$ has a non-trivial root in $\mathbb{F}_{q^3}$ if and only if there exists $u \in \mathbb{F}_{q^3}, u^{1+q+q^2}=1,$ such that \begin{equation*} 
   u^q+Au^{q+1}+A^{-q}=0 
\end{equation*} (see \cite[Theorem 1]{MR4296215}), i.e. such that
\begin{equation} \label{eq caso aqb eq 1 radici}
   (Au)^q+(Au)^{q+1}+1=0. 
\end{equation}
It is well-known that all the $q+1$ solutions of the equation $y^{q+1}+y^q+1=0$ are elements of $\mathbb{F}_{q^3}$ and satisfy $y^{1+q+q^2}=1$ (see for istance \cite{MR3087321}). Thus consider $\overline{u}=A^{-1}\overline{y},$ where $\overline{y}^{q+1}+\overline{y}^q+1=0;$  such $\overline{u}$ satisfies $\overline{u}^{1+q+q^2}=1$ if and only if $A^{1+q+q^2}=1.$ From the surjectivity of the map $\gamma: \mathbb{F}_{q^3} \rightarrow \{x\in \mathbb{F}_{q^3} : x^{1+q+q^2}=1\},$ $x \mapsto x^{q-1},$  we get the statement.
\item [(ii)]  Again, $f$ has a non-trivial root in $\mathbb{F}_{q^3}$ if and only if there exists $u \in \mathbb{F}_{q^3}, u^{1+q+q^2}=1,$ such that \begin{equation} \label{eq u a b}
   u^q+Au^{q+1}+B=0;
\end{equation}  (see \cite{MR4296215}). \\
In \cite[Theorem 1]{MR4296215}, the authors proved that if $u$ is a solution of \eqref{eq u a b} with $u^{1+q+q^2}=1,$ then $(A+B^{1+q})u=A^qB+1.$
Moreover, from \[
    \left\{
	\begin{array}{l}
	A^{1+q+q^2}+AB^{q^2}+A^qB+A^{q^2}B^q+B^{1+q+q^2}+1=0\\ 
		A+B^{1+q}=0
	\end{array}
	\right.  
\] it follows that $A^{1+q+q^2}=B^{1+q+q^2}=1,$ a contradiction to $A^qB+1 \neq 0.$ So $u=\frac{A^qB+1}{A+B^{1+q}}$ is an element of $\mathbb{F}_{q^3}^*.$ We now prove that such $u$ has the required properties.
We have that $u^{1+q+q^2}=1$ if and only if $(A^qB+1)^{1+q+q^2}=(A+B^{1+q})^{1+q+q^2} $ that is $B^{1+q+q^2}(A^{1+q+q^2}+AB^{q^2}+A^qB+A^{q^2}B^q+B^{1+q+q^2}+1)=0.$ 
Finally, by replacing $u=\frac{A^qB+1}{A+B^{1+q}}$ in Equation \eqref{eq u a b} we obtain
\begin{eqnarray*}
& & \left(\frac{A^qB+1}{A+B^{1+q}}\right)^q+A \cdot \left(\frac{A^qB+1}{A+B^{1+q}}\right)^{q+1}+B \\
&=& \frac{A^{q^2}B^q+1}{A^q+B^{q+q^2}}\left[1+\frac{A(A^qB+1)}{A+B^{1+q}}\right]+B \\
&=& \frac{(A^{q^2}B^q+1)(B^{1+q}+A^{1+q}B)}{(A+B^{1+q})(A^q+B^{q+q^2})}+B \\
&=& \frac{B^{1+q}(A^{1+q+q^2}+AB^{q^2}+A^qB+A^{q^2}B^q+B^{1+q+q^2}+1)}{(A+B^{1+q})(A^q+B^{q+q^2})} \\
&=&0.
\end{eqnarray*}
\end{itemize} 
\end{proof}
The case $A^qB=1$ and $A^{1+q+q^2} \neq 1$ is investigated in Proposition \ref{CASE:pp}. 

In the rest of this section we assume $\alpha\neq 0$.  

\begin{prop} \label{prop casi iniziali da escludere}
Let $\alpha \neq 0.$ Then $f_{i,j,\ell,k}:=\lambda Y_0^iY_1^jX_0+M_*^\ell N_*^k \nmid G_*$ in any of the following cases:
\begin{itemize}
\item [(a)]$\ell + k \geq 3;$
\item [(b)]$\ell + k =2$ and $i+j < 2;$
\item [(c)]$\ell + k =0;$
\item [(d)]$\ell + k =1$ and $i+j \geq 2.$
\end{itemize}
\end{prop}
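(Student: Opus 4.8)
The plan is to assume, for contradiction, that $f_{i,j,\ell,k}=\lambda Y_0^iY_1^jX_0+M_*^\ell N_*^k$ divides $G_*$ in $\mathbb{F}_{q^3}[Y_0,Y_1][X_0]$ for some $(i,j,\ell,k)$ falling under one of (a)--(d), and to combine a degree count with the known shape of $\alpha$ and $\delta$. Since $\alpha\neq0$ one has $\deg_{X_0}G_*=3$, while $\deg_{X_0}f_{i,j,\ell,k}=1$, so the cofactor $h:=G_*/f_{i,j,\ell,k}$ satisfies $\deg_{X_0}h=2$; write $h=h_2X_0^2+h_1X_0+h_0$ with $h_s\in\mathbb{F}_{q^3}[Y_0,Y_1]$ and $h_2\neq0$. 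Comparing the coefficients of $X_0^3$ and of $X_0^0$ in $G_*=f_{i,j,\ell,k}\,h$ yields
\[
\lambda Y_0^iY_1^j\,h_2=\alpha=c\,Y_0Y_1^2,\qquad M_*^\ell N_*^k\,h_0=\delta=M_*^2N_*^2,
\]
where $c:=(A^qB+1)\big(A^{1+q+q^2}+AB^{q^2}+A^qB+A^{q^2}B^q+B^{1+q+q^2}+1\big)\neq0$ because $\alpha\neq0$. Since $M_*$ and $N_*$ are irreducible of total degree $2$ (each has leading form a scalar multiple of $Y_0Y_1$ and is divisible by neither $Y_0$ nor $Y_1$, as $A,B\neq0$), unique factorisation in $\mathbb{F}_{q^3}[Y_0,Y_1]$ forces $h_2=\tfrac{c}{\lambda}Y_0^{\,1-i}Y_1^{\,2-j}$ and $h_0=u\,M_*^{\,2-\ell}N_*^{\,2-k}$ for some $u\in\mathbb{F}_{q^3}^*$; in particular $\deg h_2=3-(i+j)$ and $\deg h_0=8-2(\ell+k)$.

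Next I would run the total-degree bookkeeping. As $G$ is homogeneous of degree $8$, every coefficient of $X_0^s$ in $G_*$ has degree at most $8-s$; together with $\deg\delta=8$ this gives $\deg G_*=8$, hence (working in the domain $\mathbb{F}_{q^3}[X_0,Y_0,Y_1]$) $\deg f_{i,j,\ell,k}+\deg h=8$, with $\deg f_{i,j,\ell,k}=\max\{i+j+1,\,2(\ell+k)\}$ and $\deg h\ge\max\{\deg h_2+2,\,\deg h_0\}=\max\{5-(i+j),\,8-2(\ell+k)\}$. In case (c), $\ell+k=0$ gives $\deg h\ge\deg h_0=8$, so $\deg f_{i,j,\ell,k}+\deg h\ge1+8>8$, a contradiction. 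In case (d), $\ell+k=1$ gives $\deg h\ge\deg h_0=6$ and $\deg f_{i,j,\ell,k}\ge i+j+1\ge3$, again a contradiction. In case (a), $0\le\ell,k\le2$ forces $\ell+k\in\{3,4\}$; if $\ell+k=4$ then $\deg f_{i,j,\ell,k}\ge8$ and $\deg h\ge2$, impossible, while if $\ell+k=3$ then $\deg f_{i,j,\ell,k}=6$, so $\deg h=2\ge5-(i+j)$ forces $i+j=3$, i.e.\ $(i,j)=(1,2)$ and $(\ell,k)\in\{(1,2),(2,1)\}$. In case (b), $\deg f_{i,j,\ell,k}=4$, so $\deg h=4\ge5-(i+j)$ forces $i+j=1$, i.e.\ $(i,j)\in\{(1,0),(0,1)\}$ and $(\ell,k)\in\{(2,0),(0,2),(1,1)\}$. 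Thus (c) and (d) are settled outright, and (a), (b) are reduced to the short list just described, every tuple of which satisfies $i+j+1<2(\ell+k)$.

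It remains to kill the residual tuples, and this is where the argument ceases to be purely formal. For them the highest-degree form of $f_{i,j,\ell,k}$ is the $X_0$-free monomial (a scalar times) $(Y_0Y_1)^{\ell+k}$, and since $\deg(h_2X_0^2)=\deg h_2+2=5-(i+j)=\deg h$ on this list, the coefficient of $X_0^2$ in the leading form of $h$ is exactly $h_2$. Multiplying leading forms, the coefficient of $X_0^2$ in the degree-$8$ form of $G_*$ must equal the nonzero monomial $(\text{const})\,(Y_0Y_1)^{\ell+k}Y_0^{\,1-i}Y_1^{\,2-j}$; equivalently, the degree-$6$ homogeneous part of the coefficient $\beta$ of $X_0^2$ in $G_*$ would be forced to be a nonzero scalar multiple of $Y_0^3Y_1^3$ or of $Y_0^2Y_1^4$. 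Reading off $\beta$ from the MAGMA computation that already produced $\alpha$ and $\delta$, one checks that its degree-$6$ part is never of either form, which eliminates every residual tuple and completes the proof. The genuine obstacle is precisely this last step: degree counting stops short on the boundary tuples $(i+j,\ell+k)\in\{(3,3),(1,2)\}$ coming from (a) and (b), and one must inspect the explicit top-degree part of $\beta$ (keeping $\gamma$ in reserve in case $\beta$ degenerates for some special $A,B$).
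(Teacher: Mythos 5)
Your argument is correct, and it reaches the same conclusion by a genuinely different bookkeeping than the paper's. The paper works with the cleared identity $\alpha M_*^{3\ell}N_*^{3k}+\beta M_*^{2\ell}N_*^{2k}Y_0^iY_1^j+\gamma M_*^{\ell}N_*^{k}Y_0^{2i}Y_1^{2j}+\delta Y_0^{3i}Y_1^{3j}=0$ and, using the MAGMA-computed degrees $\deg\alpha=3$, $\deg\beta\le 5$, $\deg\gamma\le 7$, $\deg\delta=8$, observes that in each of (a)--(d) exactly one of the four summands has strictly maximal total degree (the $\alpha$-term in (a),(b), the $\delta$-term in (c),(d)), so the sum cannot vanish. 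You instead factor $G_*=f_{i,j,\ell,k}\,h$, pin down $h_2$ and $h_0$ from $\alpha$ and $\delta$ by unique factorisation, and compare $\deg f_{i,j,\ell,k}+\deg h$ with $\deg G_*=8$; this disposes of (c), (d) and of $\ell+k=4$ outright, and correctly isolates the boundary tuples $(i+j,\ell+k)\in\{(3,3),(1,2)\}$ on which pure degree counting is inconclusive. Your leading-form argument for those tuples forces the degree-$6$ homogeneous part of $\beta$ to be a nonzero monomial, and the paper's computation $\deg\beta=5$ (so that part is identically zero) kills them; note that the paper's own inequality chain secretly needs the very same fact $\deg\beta\le 5$ (the trivial bound $\deg\beta\le 6$ would let the $\beta$-term tie the $\alpha$-term in degree at exactly your boundary tuples), so the two proofs rest on identical computational input. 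What your route buys is a precise identification of where that input is actually needed, and it dispenses with $\gamma$ entirely; what it costs is length, plus the small care you already took that $M_*,N_*$ are irreducible and prime to $Y_0Y_1$ so that $h_0$ and $h_2$ have the stated degrees. The only cosmetic loose end is your closing hedge about ``keeping $\gamma$ in reserve'': it is not needed, since the degree-$6$ part of $\beta$ vanishes identically in $A,B$.
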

\begin{proof}
By MAGMA computations, we get   $\deg(\beta)=5,$ $\deg(\gamma)=7$ (if $\beta,\gamma$ are not zero) and $\deg(\delta)=8.$ By a direct check  the leading term of the polynomial in Equation \eqref{eq alpha beta gamma delta} appears in  $\alpha M_*^{3\ell}N_*^{3k}$ in cases $(a)$ and $(b),$ and in $\delta Y_0^{3i} Y_1^{3j}$ in cases $(c)$ and $(d)$. The claim follows.
\end{proof}
We consider now all the pending cases,
\begin{itemize}
    \item $\ell + k =2$ and $i+j = 2,3;$
    \item $\ell + k =1$ and $i+j=0,1.$
\end{itemize}
\begin{prop}   \label{prop tre fattori lineari}
Let $\alpha \neq 0.$ If $G_*$ is reducible, then a factor $\epsilon X_0+\sigma$ of $G_*$ equals  one among 
\begin{itemize}
    \item $Y_0Y_1X_0 + \lambda M_*N_*,$
    \item $X_0+\lambda N_* ,$
    \item $Y_1X_0+\lambda M_*.$
\end{itemize}
\end{prop}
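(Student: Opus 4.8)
Since $\alpha\neq0$ we have $\deg_{X_0}(G_*)=3$, so reducibility of $G_*$ over $\mathbb{F}_{q^3}(Y_0,Y_1)$ is equivalent to the existence of a linear factor, which (after clearing denominators, as in the discussion preceding Proposition~\ref{prop casi iniziali da escludere}) has the form $f_{i,j,\ell,k}=\lambda Y_0^iY_1^jX_0+M_*^\ell N_*^k$ with $0\le i\le 1$, $0\le j,\ell,k\le 2$ and $\lambda\in\mathbb{F}_{q^3}^{*}$. By Proposition~\ref{prop casi iniziali da escludere} it suffices to treat the tuples with $\ell+k=2$, $i+j\in\{2,3\}$, and with $\ell+k=1$, $i+j\in\{0,1\}$; the plan is to discard all of them except $(i,j,\ell,k)=(1,1,1,1),(0,0,0,1),(0,1,1,0)$, which give precisely the three displayed factors. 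I would first record two elementary facts: because $A,B\in\mathbb{F}_{q^3}^{*}$, each of $M_*$ and $N_*$ is irreducible (each being of degree $1$ in $Y_1$ with coprime coefficients) and coprime to $Y_0$ and to $Y_1$; and $M_*\sim N_*$ forces $A^qB=1$ and $A^{1+q+q^2}=1$, in which case $f_{A,B}$ is already a non-PP by Proposition~\ref{prop f PP se e solo se norma eq 1}(i), so I may assume $M_*\not\sim N_*$.

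Substituting $\delta=M_*^2N_*^2$ into \eqref{eq alpha beta gamma delta} (and reinstating the powers of $\lambda$), the condition $f_{i,j,\ell,k}\mid G_*$ becomes the polynomial identity
\begin{equation*}
\lambda^3\alpha\,M_*^{3\ell}N_*^{3k}+\lambda^2\beta\,M_*^{2\ell}N_*^{2k}Y_0^{i}Y_1^{j}+\lambda\gamma\,M_*^{\ell}N_*^{k}Y_0^{2i}Y_1^{2j}+M_*^{2}N_*^{2}Y_0^{3i}Y_1^{3j}=0
\end{equation*}
in $\mathbb{F}_{q^3}[Y_0,Y_1]$, in which the only unknown is $\lambda$. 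Since $\alpha=cY_0Y_1^{2}$ with $c\neq0$, the valuations of the first and last terms at each of the primes $M_*,N_*,Y_0,Y_1$ are completely explicit; by MAGMA\cite{MR1484478} I would compute the leading forms of $\beta$ and $\gamma$ and their multiplicities along $M_*,N_*,Y_0,Y_1$. The heart of the argument is then a Newton-type count: for the identity to hold, for every prime $\mathfrak p\in\{M_*,N_*,Y_0,Y_1\}$ the minimum of the four $\mathfrak p$-adic valuations must be attained at least twice; equivalently, reducing modulo a suitable power of each prime kills all but (at least) two terms. Running through the pending tuples, I expect that for every one of them except the three listed some prime isolates a single term of strictly least valuation, contradicting the identity. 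For the three exceptional tuples such a valuation obstruction never arises, and — being exactly the ones retained in the statement — they need no further treatment here.

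The step I expect to be the real obstacle is the block $\ell+k=i+j=2$, where the first three terms of the identity can all attain the common maximal total degree $15$, so that neither a single prime nor the degree itself isolates a term. There I would have to descend: compare the homogeneous leading forms of the competing terms — which forces a polynomial identity among explicit expressions in $A$ and $B$ — then the next graded component, and so on, and show that the resulting finite system of conditions is inconsistent unless $(A,B)$ lies on one of the loci already set aside ($\alpha=0$, $A^qB=1$, $A^{1+q+q^2}=1$, or $M_*\sim N_*$), while for $(1,1,1,1)$ it is consistent and produces the factor $Y_0Y_1X_0+\lambda M_*N_*$. The analogous, lighter computations in the block $\ell+k=1$, $i+j\le1$ similarly leave only $(0,0,0,1)$ and $(0,1,1,0)$, giving $X_0+\lambda N_*$ and $Y_1X_0+\lambda M_*$. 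Carrying out and organizing these MAGMA-assisted coefficient comparisons, and checking that no spurious fourth factor survives, is the main difficulty; the remainder is routine bookkeeping over the finitely many pending tuples.
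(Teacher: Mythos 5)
Your setup is sound and matches the paper's: reduce via Proposition \ref{prop casi iniziali da escludere} to the tuples with $\ell+k=2$, $i+j\in\{2,3\}$ and $\ell+k=1$, $i+j\le 1$, and eliminate all but $(1,1,1,1)$, $(0,0,0,1)$, $(0,1,1,0)$. The observation that $\alpha\neq 0$ forces $M_*\not\sim N_*$ is also a useful (and correct) preliminary. However, what you have written is a plan, not a proof: the decisive content of this proposition is precisely the case-by-case elimination of the twelve remaining tuples, and you defer every one of those verifications (``I would compute \dots I expect \dots I would have to descend''). Moreover, the valuation/Newton-count mechanism you propose as the main tool provably cannot close several of the cases on its own. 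For instance, for $(i,j,\ell,k)=(0,2,2,0)$, i.e.\ $Y_1^2X_0+\lambda M_*^2$, dividing the identity by $M_*^2$ leaves $\lambda^3\alpha M_*^4+\lambda^2\beta M_*^2Y_1^2+\lambda\gamma Y_1^4+N_*^2Y_1^6=0$, and the $M_*$-adic valuation of the last term is $0$; the ultrametric count only tells you that $M_*\nmid\gamma$ and that $\lambda\gamma\equiv N_*^2Y_1^2\pmod{M_*}$, which is a constraint on $\lambda$ and on the residue of $\gamma$, not a contradiction. You are then thrown back on exactly the explicit coefficient comparisons you postpone, and nothing in your outline guarantees they come out favourably.

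The paper's proof does these computations directly and uniformly: for each candidate $\epsilon X_0+\sigma$ it forms $\widetilde{G}=\epsilon^3G_*(\sigma/\epsilon,Y_0,Y_1)$ and exhibits (by MAGMA) one specific monomial of $\widetilde{G}$ whose coefficient is a nonzero expression in $A,B,\lambda$ under the standing hypotheses, so that candidate cannot divide $G_*$. Note that even this is not always a one-shot check: for $Y_0Y_1X_0+\lambda N_*^2$ the first coefficient vanishes exactly when $A=B^{q+1}$, a second coefficient then forces $B^{1+q+q^2}=1$, and only a third coefficient ($B^2\neq0$) yields the contradiction. This multi-stage descent is exactly the part you flag as ``the real obstacle'' and leave unresolved. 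Until those finitely many coefficient computations are actually carried out and shown to terminate in contradictions for all twelve excluded tuples, the proposition is not proved.
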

\begin{proof}
 It is readily seen that a polynomial $\epsilon(Y_0,Y_1)X_0+\sigma(Y_0,Y_1)$ divides $G_*$ if and only if 
$$\widetilde{G}(Y_0,Y_1):=\epsilon(Y_0,Y_1)^3G_*\left(\frac{\sigma(Y_0,Y_1)}{\epsilon(Y_0,Y_1)},Y_0,Y_1\right)$$ vanishes. We distinguish the following cases:
\begin{itemize}
    \item $Y_1^2X_0+\lambda N_*(Y_0,Y_1)^2.$ \\
    MAGMA computations
    show that the coefficient of $Y_0^5Y_1^7$ in $\widetilde{G}(Y_0,Y_1)$ is  $\lambda A^{1+q^2} (A^qB+1)\neq 0$, so $Y_1^2X_0+\lambda N_* \nmid G_*.$
    \item $Y_0Y_1X_0+ \lambda N_*^2.$ \\
    The coefficient of $Y_0^6Y_1^5$ in $\widetilde{G}(Y_0,Y_1)$ is  $\lambda AB^q(A^qB+1)(A+B^{q+1})^{q^2},$ that is zero if and only if $A+B^{1+q}= 0.$  By replacing $A=B^{q+1}$ in $\widetilde{G}(Y_0,Y_1)$ we get that the coefficient of $Y_0Y_1^7$ in  $\widetilde{G}(Y_0,Y_1)$ is $\lambda^3B^{6(1+q)} (B^{1+q+q^2}+1)^3.$ By replacing $B^{1+q+q^2}=1$ in $\widetilde{G}(Y_0,Y_1),$ MAGMA computations show that the coefficient of $Y_0^7Y_1^7$ in $\widetilde{G}(Y_0,Y_1)$ is  $B^2\neq 0.$ Thus $Y_0Y_1X_0+ \lambda N_*^2 \nmid G_*.$
    \item $Y_0Y_1^2X_0+ \lambda N_*^2.$ \\
    Analogous to the case $\epsilon=Y_0Y_1$ and $\sigma=\lambda N_*^2.$
         
         
     \item $Y_1^2X_0+ \lambda M_*N_*.$ \\ 
     The coefficients of $Y_0^7Y_1^8$ in $\widetilde{G}(Y_0,Y_1)$ reads $\lambda^3 B^3 (A^qB+1)(A^{1+q+q^2}+AB^{q^2}+A^qB+A^{q^2}B^q+B^{1+q+q^2}+1)\neq 0$. So $Y_1^2X_0+ \lambda M_*N_* \nmid G_*.$
     \item $Y_0Y_1^2X_0+ \lambda M_*N_*.$ \\
     Analogous to the case $\epsilon=Y_0Y_1$ and $\sigma=\lambda N_*^2.$
     \item $Y_1^2X_0+ \lambda M_*^2.$ \\
     The coefficient of $Y_0^5Y_1^7$ in  $\widetilde{G}(Y_0,Y_1)$ is $\lambda B^{q+q^2}(A^qB+1)\neq 0$,. Thus $Y_1^2X_0+ \lambda M_*^2 \nmid G_*.$
     \item $Y_0Y_1X_0 +\lambda M_*^2.$ \\
     Analogous to the case $\epsilon=Y_0Y_1$ and $\sigma=\lambda N_*^2.$
     \item $Y_0Y_1^2X_0 +\lambda M_*^2.$ \\
     Analogous to the case $\epsilon=Y_0Y_1$ and $\sigma=\lambda N_*^2.$
     \item $Y_1X_0 + \lambda N_*.$ \\
      The coefficient of $Y_0Y_1^7$ in $\widetilde{G}(Y_0,Y_1)$ reads $ \lambda A^3(A^qB+1)^q\neq 0,$ so $Y_1X_0 + \lambda N_* \nmid G_*.$
     \item $Y_0X_0 + \lambda N_*.$ \\
     The coefficient of $Y_0^7$ in $\widetilde{G}(Y_0,Y_1)$ is $A^{2q}\neq 0$. Thus $Y_0X_0 + \lambda N_* \nmid G_*.$
     \item $X_0+\lambda M_*.$  \\
     The coefficient of $Y_0Y_1^3$ in $\widetilde{G}(Y_0,Y_1)$ reads $\lambda B^{3q}(A^qB+1)^{q^2}\neq 0,$ so $X_0+\lambda M_* \nmid G_*.$
    \item $Y_0X_0+\lambda M_* $   \\ 
    The coefficient of $Y_0^7$ in $\widetilde{G}(Y_0,Y_1)$ is $A^{2q}\neq 0$. Thus $Y_0X_0+\lambda M_* \nmid G_*.$
\end{itemize}
The claim now follows from Proposition \ref{prop casi iniziali da escludere}.
\end{proof}

\begin{rem}
  For $A^qB \in \mathbb{F}_q^* \setminus \left\lbrace 1 \right\rbrace $ and $A^{1+q+q^2}=B^{1+q+q^2},$ it is already known  that  $f(X)=X^{q^2-q+1}+AX^{q^2}+BX$ is a PP over $\mathbb{F}_{q^3}$ for each $q$  (see \cite[Theorem 1]{MR4296215}). In these cases $\mathcal{W}$ decomposes in three components and $G_*=(A^qB+1)^2(Y_0Y_1X_0 + \lambda_1 M_*N_*)(X_0+\lambda_2 N_*)(Y_1X_0+\lambda_3 M_*),$ for suitable  $\lambda_1,\lambda_2,\lambda_3 \in \mathbb{F}_{q^3}.$ 
In what follows we will prove that for $q$ large enough, also the converse is true.  
\end{rem}

\begin{prop} \label{prop due fattori facili}
Let $\alpha \neq 0.$ If $X_0+\lambda N_* \mid G_*$ or $Y_1X_0+\lambda M_* \mid G_*,$ then $A^qB \in \mathbb{F}_q^* \setminus \left\lbrace 1 \right\rbrace $ and $A^{1+q+q^2}=B^{1+q+q^2}.$
\end{prop}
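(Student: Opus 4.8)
The plan is to exploit the divisibility criterion already used in the proof of Proposition \ref{prop tre fattori lineari}: a linear polynomial $\epsilon(Y_0,Y_1)X_0+\sigma(Y_0,Y_1)$ divides $G_*$ if and only if $\widetilde{G}(Y_0,Y_1):=\epsilon(Y_0,Y_1)^3\,G_*\!\left(\tfrac{\sigma(Y_0,Y_1)}{\epsilon(Y_0,Y_1)},Y_0,Y_1\right)$ is the zero polynomial, i.e.\ every coefficient of $\widetilde{G}$, viewed as a polynomial in $Y_0,Y_1$ with coefficients in $\mathbb{F}_{q^3}[A,B,\lambda]$, vanishes. For the first claimed factor one takes $\epsilon=1,\ \sigma=\lambda N_*$, so $\widetilde{G}=\alpha\lambda^3N_*^3+\beta\lambda^2N_*^2+\gamma\lambda N_*+M_*^2N_*^2$; for the second one takes $\epsilon=Y_1,\ \sigma=\lambda M_*$, so $\widetilde{G}=\alpha\lambda^3M_*^3+\beta\lambda^2M_*^2Y_1+\gamma\lambda M_*Y_1^2+M_*^2N_*^2Y_1^3$. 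In both cases I would compute a small number of coefficients of $\widetilde{G}$ with MAGMA and extract the consequences of their vanishing.

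The heart of the argument is to squeeze the conditions on $A,B$ out of these vanishing coefficients. First I would locate a coefficient whose vanishing reads $\lambda\cdot(\text{nonzero scalar})\cdot(A+B^{1+q})^{q^{j}}=0$ or, more generally, pins down $\lambda$: either $A+B^{1+q}=0$, or $\lambda$ is forced to equal an explicit rational expression $\rho(A,B)$ (here one uses $\alpha\neq 0$, which by definition means $A^qB\neq1$ and $A^{1+q+q^2}+AB^{q^2}+A^qB+A^{q^2}B^q+B^{1+q+q^2}+1\neq0$, in order to divide by the offending factors). The sub-case $A+B^{1+q}=0$ must be excluded, and this is genuinely needed, since $A=B^{1+q}$ with $\alpha\neq0$ forces $A^qB=B^{1+q+q^2}\in\mathbb{F}_q$ but $A^{1+q+q^2}=B^{2(1+q+q^2)}\neq B^{1+q+q^2}$, so the conclusion would fail; thus the proof must show $\widetilde G\not\equiv 0$ in that sub-case, by substituting $A=B^{1+q}$ into $\widetilde{G}$ and exhibiting, via MAGMA, a monomial with nonzero coefficient (a power of $B$ times a power of $\lambda$) — exactly the kind of sub-case split already carried out in Proposition \ref{prop tre fattori lineari}. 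With $A+B^{1+q}\neq0$ and $\lambda=\rho(A,B)$ in hand, I would substitute back into two further coefficients of $\widetilde{G}$; MAGMA should show that, after clearing denominators, these reduce to $(\text{nonzero})\cdot(A^{q^2}B^q+A^qB)$ and $(\text{nonzero})\cdot(A^{1+q+q^2}+B^{1+q+q^2})$, forcing $A^{q^2}B^q=A^qB$ — that is, $A^qB\in\mathbb{F}_q$ — and $A^{1+q+q^2}=B^{1+q+q^2}$. Since $A,B\neq0$ gives $A^qB\neq0$ and $\alpha\neq0$ gives $A^qB\neq1$, this is precisely $A^qB\in\mathbb{F}_q^*\setminus\{1\}$ together with $A^{1+q+q^2}=B^{1+q+q^2}$. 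The factor $Y_1X_0+\lambda M_*$ is handled by the same scheme (alternatively one could try to transport it to the first case via the symmetries $\Sigma$, $\Psi_q$ of $\mathcal{W}$, but the correspondence between $M_*$ and $N_*$ under these maps is not transparent, so a direct MAGMA computation is safer).

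I expect the main obstacle to be purely the bookkeeping: choosing coefficients of $\widetilde{G}$ whose vanishing genuinely isolates $\lambda$ first and then the two relations $A^{q^2}B^q=A^qB$ and $A^{1+q+q^2}=B^{1+q+q^2}$ as clean factors, and, at each division, keeping track of which quantities ($N_*$, $M_*$, $A^qB+1$, the sextic above, $A+B^{1+q}$, and so on) are already known to be nonzero so that every manipulation is legitimate. As in Propositions \ref{prop casi iniziali da escludere} and \ref{prop tre fattori lineari}, all of this becomes routine once delegated to MAGMA; the only real creative step is selecting the right monomials so that the computation stays short.
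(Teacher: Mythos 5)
Your proposal follows essentially the same route as the paper: the paper evaluates $\widetilde{G}=G_*(\lambda N_*,Y_0,Y_1)$, reads off the coefficient of $Y_1^4$, namely $A^2B^q((A^{q+1}+B^q)\lambda+B^q)$, whose vanishing simultaneously forces $A^{q+1}+B^q\neq 0$ and pins down $\lambda=B^q/(A^{q+1}+B^q)$ (so the sub-case split you anticipate is absorbed into this single coefficient), and then the coefficients of $Y_0^2Y_1^2$ and $Y_0^2Y_1^3$ yield $A^{1+2q}=B^{2q+q^2}$ and $A^{1+q+q^2}+A^qB+A^{q^2}B^q+B^{1+q+q^2}=0$, which combine to give $A^qB=A^{q^2}B^q$ and the norm condition. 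The factor $Y_1X_0+\lambda M_*$ is handled as analogous, exactly as you propose.
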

\begin{proof}
 MAGMA computations show that the coefficient of $Y_1^4$ in $\widetilde{G}(Y_0,Y_1)=G_*\left(\lambda N_*,Y_0,Y_1\right)$  is $A^2B^q((A^{q+1}+B^q)\lambda+B^q),$ so $\widetilde{G}(Y_0,Y_1)=0$ yields $(A^{q+1}+B^q)\lambda=B^q, $ and in particular $ A^{q+1}+B^q \neq 0. $
\\ By replacing $\lambda=\frac{B^q}{A^{q+1}+B^q}$ in $\widetilde{G},$ the coefficients of $Y_0^2Y_1^2$ and $Y_0^2Y_1^3$ in $\widetilde{G}$ are $A^{1+q}(A^{q+1}+B^q)(A^{1+2q}+B^{2q+q^2})$ and $A^{1+q}B^q(A^{q+1}+B^q)(A^{1+q+q^2}+A^qB+ A^{q^2}B^q+ B^{1+q+q^2}), $ respectively. Therefore, $\widetilde{G}=0$ implies $A^{1+2q}+B^{2q+q^2}=0=A^{1+q+q^2}+A^qB+ A^{q^2}B^q+ B^{1+q+q^2},$ from which it follows that $B^q(A^{q+1}+B^q)(A^qB+A^{q^2}B^q)=0,$ so $A^qB \in \mathbb{F}_q^* \setminus \left\lbrace 1 \right\rbrace $ and $A^{1+q+q^2}=B^{1+q+q^2}.$ \\
The case $Y_1X_0+\lambda N_*,X_0 \mid G_*$ is analogous.
\end{proof}
\begin{comment}
\begin{rem}
Consider now the pending case, that is $\epsilon X_0+\sigma=Y_0Y_1X_0 + \lambda mn.$ \\
By computing $Res(G_*,Y_0Y_1X_0 + \lambda mn,X_0)$ with MAGMA, we obtain  $Res(G_*,Y_0Y_1X_0 + \lambda mn,X_0)=0 \Rightarrow (A^qB+1)\lambda=1$ or $(A^{1+q+q^2}B  + AB^{1+q^2} + A^qB^2 + A^{q^2}B^{1+q} + B^{2+q+q^2} +B) \lambda + A^{1+q^2} +B=0.$ \\
In the firs case, by replacing $\lambda=(A^qB+1)^{-1}$ in $Res(G_*,Y_0Y_1X_0 + \lambda mn,X_0)=0,$ we get $A^qB \in \mathbb{F}_q$ from which follows that $A^{2+q}=B^{1+2q}$ that is, multiplying by $A^{q^2}B^{q^2},$ $A^{1+q+q^2}=B^{1+q+q^2}.$ \\
The case $\lambda=\frac{A^{1+q^2} +B}{A^{1+q+q^2}B  + AB^{1+q^2} + A^qB^2 + A^{q^2}B^{1+q} + B^{2+q+q^2} +B}$ is more complicated; we can proceed as follows.
\end{rem}
\end{comment}

\begin{prop} \label{prop fattore difficile}
Suppose that $\mathcal{W}$ does not contain an absolutely irreducible component fixed by $\Psi_q$.  If $X_0Y_0Y_1+\lambda M_*N_* \mid G_*,$  then there is another  factor $\epsilon X_0+\sigma$ that divides $G_*.$ In particular  $A^qB \in \mathbb{F}_q^* \setminus \left\lbrace 1 \right\rbrace $ and $A^{1+q+q^2}=B^{1+q+q^2}.$
\begin{comment}
Let $q=2^m,$ with $m \geq 19.$ 
Assume that $f(X)=X^{q^2-q+1}+AX^{q^2}+BX \in \mathbb{F}_{q^3}[X]$ is  a PP over $\mathbb{F}_{q^3}$ and let $\alpha \neq 0.$ If $X_0Y_0Y_1+\lambda mn \mid G_*,$  then there is another  factor $\epsilon X_0+\sigma$ that divides $G_*$ and 
\end{comment}
\end{prop}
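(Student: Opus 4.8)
The plan is to play the divisibility hypothesis against the rigidity imposed by the order-$3$ symmetry $\Psi_q$. Write $P_*:=X_0Y_0Y_1+\lambda M_*N_*$. Since $P_*\mid G_*$ and $\deg_{X_0}G_*=3$, we may write $G_*=P_*\,Q_*$ with $Q_*$ of $X_0$-degree $2$, and (as $\lambda\in\mathbb{F}_{q^3}^*$ and $A,B\in\mathbb{F}_{q^3}$) both $P_*$ and $Q_*$ lie in $\mathbb{F}_{q^3}[X_0,Y_0,Y_1]$. Next, $P_*$ is absolutely irreducible: it is linear in $X_0$ and primitive in $X_0$, because $\gcd(Y_0Y_1,M_*N_*)=1$ --- specialising $Y_0=0$ or $Y_1=0$ shows neither variable divides $M_*$ or $N_*$, since $A,B\neq 0$. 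By Gauss's lemma $Q_*$ is primitive in $X_0$ as well. Finally, recall from Section~\ref{Section:Connection} that through the birational projection of $\mathcal{W}$ onto $\{G=0\}\subseteq\mathbb{P}^3$ the absolutely irreducible components of $\mathcal{W}$ are in bijection with the irreducible factors of $G_*$; in particular the component $\mathcal{P}$ of $\mathcal{W}$ attached to $P_*$ is defined over $\mathbb{F}_{q^3}$.

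The core of the argument is to rule out that $Q_*$ is absolutely irreducible. Assume it is. Then $\mathcal{W}$ has exactly two absolutely irreducible components, $\mathcal{P}$ (from $P_*$) and $\mathcal{Q}$ (from $Q_*$), both defined over $\mathbb{F}_{q^3}$. The collineation $\Psi_q$ fixes $\mathcal{W}$ and hence permutes the two-element set $\{\mathcal{P},\mathcal{Q}\}$; since $\Psi_q^3$ fixes every subvariety of $\mathbb{P}^5$ defined over $\mathbb{F}_{q^3}$, this permutation has order dividing $3$, so it is the identity. Thus $\mathcal{P}$ and $\mathcal{Q}$ are both fixed by $\Psi_q$, contradicting the standing hypothesis that $\mathcal{W}$ contains no absolutely irreducible component fixed by $\Psi_q$. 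Therefore $Q_*$ is reducible; being primitive in $X_0$ of $X_0$-degree $2$, it factors as a product of two polynomials linear in $X_0$, so $G_*$ has a factor $\epsilon X_0+\sigma$ linear in $X_0$ and distinct from $P_*$ --- the promised further factor.

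It remains to identify $\epsilon X_0+\sigma$. By Proposition~\ref{prop tre fattori lineari} it equals one of $Y_0Y_1X_0+\lambda'M_*N_*$, $X_0+\lambda'N_*$, $Y_1X_0+\lambda'M_*$. The first is impossible: it would give $(X_0Y_0Y_1+\lambda M_*N_*)(X_0Y_0Y_1+\lambda'M_*N_*)\mid G_*$, forcing the coefficient $\alpha$ of $X_0^3$ to be divisible by $Y_0^2Y_1^2$, against $\alpha=(A^qB+1)(A^{1+q+q^2}+AB^{q^2}+A^qB+A^{q^2}B^q+B^{1+q+q^2}+1)Y_0Y_1^2\neq 0$ (we are in the case $\alpha\neq 0$). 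Hence $\epsilon X_0+\sigma\in\{X_0+\lambda'N_*,\ Y_1X_0+\lambda'M_*\}$, and Proposition~\ref{prop due fattori facili} yields $A^qB\in\mathbb{F}_q^*\setminus\{1\}$ and $A^{1+q+q^2}=B^{1+q+q^2}$, as claimed.

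The step I expect to be the main obstacle is the second one: one must be confident in the dictionary of Section~\ref{Section:Connection} between absolutely irreducible components of $\mathcal{W}$ and irreducible factors of $G_*$, and in particular that when $G_*=P_*Q_*$ with $P_*,Q_*$ both irreducible the two resulting components are $\mathbb{F}_{q^3}$-rational, so that ``a permutation of a two-element set of order dividing $3$ is trivial'' actually closes the case. Everything else --- the primitivity statements, the degree comparison ruling out a second $P_*$-type factor, and the invocations of Propositions~\ref{prop tre fattori lineari} and~\ref{prop due fattori facili} --- is routine. As a more computational alternative, one could instead impose $P_*\mid G_*$ directly by substituting $X_0=\lambda M_*N_*/(Y_0Y_1)$ into $G_*$, clear denominators, solve the resulting relation for $\lambda$ (which splits into two branches), substitute back, and check with MAGMA that each branch forces $A^qB\in\mathbb{F}_q$ and $A^{1+q+q^2}=B^{1+q+q^2}$.
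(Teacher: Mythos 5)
Your proof is correct and follows the same skeleton as the paper's: establish that $P_*=X_0Y_0Y_1+\lambda M_*N_*$ is a non-repeated absolutely irreducible factor, reduce to the dichotomy ``the cofactor $Q_*$ is absolutely irreducible of $X_0$-degree $2$'' versus ``$Q_*$ splits into two $X_0$-linear factors'', kill the first branch using the $\Psi_q$-symmetry, and finish with Propositions \ref{prop tre fattori lineari} and \ref{prop due fattori facili}. The one place where you genuinely diverge is how the first branch is killed. The paper argues that, since $\mathcal{W}_1$ is not fixed, $\Psi_q$ must send it to the other component $\mathcal{W}_2$, and then derives a contradiction from the fact that a collineation preserves degree while $\mathcal{W}_1$ and $\mathcal{W}_2$ have generic fibres of cardinality $1$ and $2$ over the $(Y_0:Y_1:Y_2)$-space (exhibited by intersecting with a suitable $3$-space $\pi$ and its preimage $\tilde{\pi}$). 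You instead observe that $\Psi_q$ induces a permutation of the two-element set of components whose cube is trivial, because $\Psi_q^3$ raises coefficients to the power $q^3$ and shifts the coordinates cyclically by three steps, hence fixes every subvariety defined over $\mathbb{F}_{q^3}$; a permutation of a two-element set of order dividing $3$ is the identity, so both components are fixed, contradicting the hypothesis directly. This is shorter and avoids the degree/intersection bookkeeping, which in the paper needs a little care (choosing $\pi$ away from the loci where the parametrizing denominators vanish, distinctness of the two roots of $p(X_0,h,k)$). Your final paragraph, ruling out a second factor of the shape $Y_0Y_1X_0+\lambda' M_*N_*$ via $Y_0^2\nmid\alpha$, is exactly the paper's opening observation that $P_*$ cannot be repeated, and it justifies the paper's otherwise unexplained assertion $\epsilon_1\neq Y_0Y_1\neq\epsilon_2$ in its case (ii). The only caveat, which you flag yourself and which the paper shares, is the identification of the absolutely irreducible components of $\mathcal{W}$ with the irreducible factors of $G_*$ through the rational parametrization of $X_1,X_2$; on that point you are on the same footing as the authors.
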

\begin{proof}
First, we observe that $Y_0^2 \nmid \alpha(Y_0,Y_1),$ so $X_0Y_0Y_1+\lambda MN$ cannot be a repeated factor of $G_*$  \\
In particular

\begin{equation*}
  \mathcal{W}_1:	\left\{
	\begin{array}{l}
	X_2= \frac{[BX_0Y_1 +N(Y_0,Y_1,Y_2)][A^qX_0Y_2+M(Y_0,Y_1,Y_2)]+X_0^2Y_1Y_2}{Y_1[(A^{q+1}+B^q)X_0Y_2+A\cdot M(Y_0,Y_1,Y_2)]}\\ 
		X_1=\frac{[BX_0Y_1 +N(Y_0,Y_1,Y_2)][A^qX_0Y_2+M(Y_0,Y_1,Y_2)]+X_0^2Y_1Y_2}{Y_2[(A+B^{q+1})X_0Y_1+B^q\cdot N(Y_0,Y_1,Y_2)]}\\
	  X_0=\lambda \cdot \frac{ M(Y_0,Y_1,Y_2)N(Y_0,Y_1,Y_2)}{Y_0Y_1Y_2}
	\end{array}
	\right.  
\end{equation*}

is a non-repeated absolutely irreducible component of $\mathcal{W}$ over $\mathbb{F}_{q^3}.$ 
Thus there are two possible factorization of $G_*:$
\begin{item}
\item [(i)]  $G_*=(X_0Y_0Y_1+\lambda M_*N_*)p(X_0,Y_0,Y_1)$ where $p=\frac{\alpha(Y_0,Y_1)}{Y_0Y_1^2}X_0^2Y_1+ \dots + \frac{1}{\lambda}M_*(Y_0,Y_1)N_*(Y_0,Y_1)$ is absolutely irreducible of degree two in $X_0.$
\item [(ii)] $G_*=(X_0Y_0Y_1+\lambda M_*N_*)(\epsilon_1 X_0+\sigma_1)(\epsilon_2 X_0+\sigma_2)$ with $\epsilon_1 \neq Y_0Y_1 \neq \epsilon_2.$
\end{item}
\\
We want to prove that a factorization of type  $(i)$ cannot exist, namely that $p(Y_0,Y_1)$ can't be absolutely irreducible; from this the claim will follow. \\
Suppose by way of contradiction that $p$ is absolutely irreducible; then 

\begin{equation*}
	\mathcal{W}_2:  \left\{
	\begin{array}{l}
	X_2= \frac{[BX_0Y_1 +N(Y_0,Y_1,Y_2)][A^qX_0Y_2+M(Y_0,Y_1,Y_2)]+X_0^2Y_1Y_2}{Y_1[(A^{q+1}+B^q)X_0Y_2+A\cdot M(Y_0,Y_1,Y_2)]}\\ 
		X_1=\frac{[BX_0Y_1 +N(Y_0,Y_1,Y_2)][A^qX_0Y_2+M(Y_0,Y_1,Y_2)]+X_0^2Y_1Y_2}{Y_2[(A+B^{q+1})X_0Y_1+B^q\cdot N(Y_0,Y_1,Y_2)]}\\
	  \frac{\alpha(Y_0,Y_1)}{Y_0Y_1^2}X_0^2Y_1Y_2+ \dots + \frac{1}{\lambda}M(Y_0,Y_1,Y_2)N(Y_0,Y_1,Y_2)=0
	\end{array}
	\right.  
\end{equation*}

is an absolutely irreducible component of $\mathcal{W}$ defined over $\mathbb{F}_{q^3}$ and $\mathcal{W}=\mathcal{W}_1 \cup \mathcal{W}_2.$ \\
Consider the collineation $\Psi_q$ of $\mathbb{P}^5(\mathbb{F}_{q^3})$ introduced before.  
Since  $\mathcal{V}_1$ is not fixed by $\Psi_q$  
so $\Psi_q(\mathcal{V}_1)=\mathcal{V}_2.$  As $\Psi_q$ is a collineation of $\mathbb{P}^5(\mathbb{F}_{q^3}),$ $\mathcal{V}_1$ and  $\mathcal{V}_2$ have the same degree. Moreover, if $p$ is absolutely irreducible, then it is not a square as a polynomial, so there exists a  subspace  
$
  \pi:  \left\{
	\begin{array}{l}
	Y_0= hY_2\\ 
		Y_1=kY_2
	\end{array}
	\right.  
$
of $\mathbb{P}^5(\mathbb{F}_{q^3})$ of dimension 3 not contained  in the hyperplanes $Y_0=0$ nor $Y_1=0$ (i.e $h\neq 0 \neq k$) such that $p(X_0,h,k)$ has 2 distinct roots, i.e $| \mathcal{V}_2 \cap \pi |=2,$ while $| \mathcal{V}_1 \cap \Tilde{\pi} |=1,$ where $\Psi_q(\Tilde{\pi})=\pi,$  $
  \Tilde{\pi}:  \left\{
	\begin{array}{l}
	Y_2= h^{q^2}Y_1\\ 
		Y_0=k^{q^2}Y_1
	\end{array}
	\right.  
,$ a contradiction. 
So there exists a factor $\epsilon X_0+\sigma$ of $G_*$ different from $X_0Y_0Y_1+\lambda M_*N_*.$
By  Proposition \ref{prop tre fattori lineari} we have that one between $X_0+\lambda N_*$ and $Y_1X_0+\lambda M_*$ divides $G_*;$ the claim now follows from Proposition \ref{prop due fattori facili}.    

\end{proof}
\begin{comment}
\begin{cor}
Let $f(X)=X^{q^2-q+1}+AX^{q^2}+BX \in \mathbb{F}_{q^3}[X]$   a PP and let $\alpha \neq 0.$ If $X_0Y_0Y_1+\lambda mn \mid G_*$ for some $\lambda \in \mathbb{F}_{q^3} ,$ then $A^qB \in \mathbb{F}_q^* \setminus \left\lbrace 1 \right\rbrace $ and $A^{1+q+q^2}=B^{1+q+q^2}.$
\end{cor}
\begin{proof}
By Proposition \ref{prop fattore difficile} and Remark \ref{rem tre fattori lineari} we get that one of $X_0+\lambda n $ and $Y_1X_0+\lambda m$ divides $G_*.$ From Proposition \ref{prop due fattori facili} we deduce the statement.    
\end{proof}
\end{comment}
We can now prove our main result.
\begin{thm}
Let $q=2^m,$ with $m \geq 19.$ Assume that $f(X)=X^{q^2-q+1}+AX^{q^2}+BX \in \mathbb{F}_{q^3}[X]$ is  a PP over $\mathbb{F}_{q^3}$ and let  $A^qB+1 \neq 0 \neq A^{1+q+q^2}+AB^{q^2}+A^qB+A^{q^2}B^q+B^{1+q+q^2}+1.$ Then  $A^{q^2+q+1}=B^{1+q+q^2}$ and $A^qB \in \mathbb{F}_q \setminus \{0,1\}.$  
\end{thm}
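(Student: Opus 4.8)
The strategy is to exploit the decomposition of $\Theta^{-1}(\mathcal{V})$ in \eqref{eq decomp of theta v} together with the geometric information on $\mathcal{W}$ gathered in Propositions \ref{prop casi iniziali da escludere}--\ref{prop fattore difficile}, and to exclude ``bad'' factorizations of $G_*$ by producing, via Theorem \ref{thm lang weil versione tredici terzi}, a non-trivial $\mathbb{F}_{q^3}$-collision of $f_{A,B}$ whenever such a factorization occurs. First I would observe that the two non-vanishing hypotheses are precisely the condition that the scalar factor $(A^qB+1)(A^{1+q+q^2}+AB^{q^2}+A^qB+A^{q^2}B^q+B^{1+q+q^2}+1)$ appearing in $\alpha(Y_0,Y_1)$ be nonzero; hence $\alpha\neq 0$, so $\deg_{X_0}(G_*)=3$ and Propositions \ref{prop casi iniziali da escludere}, \ref{prop tre fattori lineari}, \ref{prop due fattori facili} and \ref{prop fattore difficile} are all in force.

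The heart of the argument is the claim that, since $f_{A,B}$ is a PP and $m\geq 19$, the surface $\mathcal{W}$ contains no absolutely irreducible component fixed by $\Psi_q$. Indeed, suppose $\mathcal{Z}\subseteq\mathcal{W}$ were such a component. As explained after \eqref{eq varietà fissata da shift}, $\mathcal{Z}$ corresponds to an absolutely irreducible $\mathbb{F}_q$-rational component $\mathcal{Z}'\subseteq\mathbb{P}^5(\mathbb{F}_q)$ of $\mathcal{V}$, a surface of degree $\delta\leq\deg(\mathcal{W})\leq 12$; since $q=2^m\geq 2^{19}>2(2+1)\cdot 12^2$, Theorem \ref{thm lang weil versione tredici terzi} gives $\#\mathcal{Z}'(\mathbb{F}_q)\geq q^2-(\delta-1)(\delta-2)q^{3/2}-5\delta^{13/3}q$. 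For $m\geq 19$ this quantity strictly exceeds the number of $\mathbb{F}_q$-rational points of $\mathcal{Z}'$ lying on the at most one-dimensional intersection of $\mathcal{Z}'$ with the images under $\Theta$ of $\mathcal{U}_1\cup\mathcal{U}_2\cup\mathcal{U}_3$, which is $O(q)$. Hence $\mathcal{Z}'$ has an $\mathbb{F}_q$-rational point off those images; its preimage under $\Theta\circ\Lambda$ is a pair $(a,b)\in\mathbb{F}_{q^3}^2$ with $ab(a+b)\neq 0$ and $\varphi_{f_{A,B}}(a,b)=0$, i.e. $a\neq b$ and $f_{A,B}(a)=f_{A,B}(b)$, contradicting the injectivity of $f_{A,B}$. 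This proves the claim.

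Granting the claim, $\mathcal{W}$ — which is itself fixed by $\Psi_q$ — cannot be absolutely irreducible, so $G_*$ is reducible and, by Proposition \ref{prop tre fattori lineari}, admits a factor $\epsilon X_0+\sigma$ equal to one of $Y_0Y_1X_0+\lambda M_*N_*$, $X_0+\lambda N_*$, or $Y_1X_0+\lambda M_*$. If $\epsilon X_0+\sigma$ is $X_0+\lambda N_*$ or $Y_1X_0+\lambda M_*$, Proposition \ref{prop due fattori facili} yields immediately $A^qB\in\mathbb{F}_q^*\setminus\{1\}$ and $A^{1+q+q^2}=B^{1+q+q^2}$. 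If instead $\epsilon X_0+\sigma$ is $Y_0Y_1X_0+\lambda M_*N_*$, then, since $\mathcal{W}$ has no absolutely irreducible $\Psi_q$-fixed component, Proposition \ref{prop fattore difficile} applies and again gives $A^qB\in\mathbb{F}_q^*\setminus\{1\}$ and $A^{1+q+q^2}=B^{1+q+q^2}$. Since $A^{1+q+q^2}=A^{q^2+q+1}$ and $\mathbb{F}_q^*\setminus\{1\}=\mathbb{F}_q\setminus\{0,1\}$, this is exactly the asserted conclusion.

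The step I expect to be the main obstacle is the claim in the second paragraph: turning an absolutely irreducible $\Psi_q$-fixed component of $\mathcal{W}$ into a genuine $\mathbb{F}_{q^3}$-collision of $f_{A,B}$ requires (i) a usable upper bound $\delta\leq 12$ on the degree of such a component, coming from \eqref{eq decomp of theta v}; (ii) verifying the hypothesis $q>2(r+1)\delta^2$ of Theorem \ref{thm lang weil versione tredici terzi} with $r=2$; and, most delicately, (iii) showing that not all of the $\sim q^2$ rational points thus produced can fall on the trivial locus $XY(X+Y)=0$ — it is this comparison between $q^2$ and the $O(q)$ points of the excluded subvariety that pins down the explicit bound $m\geq 19$. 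Once this is in place, Propositions \ref{prop tre fattori lineari}, \ref{prop due fattori facili} and \ref{prop fattore difficile} reduce the remainder of the proof to bookkeeping on the three surviving candidate factors.
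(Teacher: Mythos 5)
Your proposal is correct and follows essentially the same route as the paper: you note that the two non-vanishing hypotheses are exactly $\alpha\neq 0$, rule out a $\Psi_q$-fixed absolutely irreducible component of $\mathcal{W}$ by combining Theorem \ref{thm lang weil versione tredici terzi} with the degree bound $\deg(\mathcal{W})\le 12$ from \eqref{eq decomp of theta v} and the comparison with the points on $\Theta(\mathcal{U}_1\cup\mathcal{U}_2\cup\mathcal{U}_3)$, and then conclude via Propositions \ref{prop tre fattori lineari}, \ref{prop due fattori facili} and \ref{prop fattore difficile}. The only cosmetic difference is that you bound the excluded locus by $O(q)$ where the paper claims at most $36$ points; either estimate suffices for $q\ge 2^{19}$.
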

\begin{proof}
Assume by  way of contradiction that 
there exists an absolutely irreducible components component $\mathcal{W}_1$ of $\mathcal{W}$ fixed by $\Psi_q.$ 

  Then $\mathcal{W}_1$ corresponds via the projectivity $\Theta$ to an absolutely irreducible $\mathbb{F}_q$-rational component $\mathcal{V}_1$ of $\mathcal{V},$ with  $\dim(\mathcal{V}_1)= \dim(\mathcal{W}_1)\leq \dim(\mathcal{W})\leq 12$ (see Equation \eqref{eq decomp of theta v}).
So, by Theorem \ref{thm lang weil versione tredici terzi} it follows that \begin{equation}
   \#(\mathcal{V}_1 (\mathbb{F}^5_q)) \geq q^2
 -110q^{3/2} - 5\cdot 12^{13/3}q.
\end{equation}
Moreover, $\mathcal{V}_1$ intersects each of the planes 
\begin{equation}
    \Theta(\mathcal{U}_1): \left\{
	\begin{array}{l}
	x_2=0\\ 
		x_1=0\\
	  x_0=0
	\end{array}
	\right. 
	 \ \ \ \ \ \ \ \ \Theta(\mathcal{U}_2):\left\{
	\begin{array}{l}
	 y_2=0\\ 
		y_1=0 \\
	  y_0=0
	\end{array}
	\right.
	\ \ \ \ \ \ \ \ \Theta(\mathcal{U}_3):
	\left\{
	\begin{array}{l}
	 x_2=y_2\\ 
		x_1=y_1 \\
	  x_0=y_0
	\end{array}
	\right.
	\end{equation}
in at most 36 points. Since \[q^2
 -110q^{3/2} - 5\cdot 12^{13/3}q > 36 \] is satisfied for $q \geq 2^{19},$ we deduce the existence of $\mathbb{F}_q$-rational points on $\mathcal{V}$ off $\Theta(\mathcal{U}_1\cup \mathcal{U}_2 \cup \mathcal{U}_3),$ which corresponds to points on $\mathcal{C}_{f_{A,B}}$ off $X^qY^q(X+Y)=0,$ a contradiction to our hypothesis via Proposition \ref{prop collegam curve}.
 
 Thus, no absolutely irreducible components component of $\mathcal{W}$ is fixed by $\Psi_q.$ 
 
 In particular, $\mathcal{W}$ is reducible and so is $G_*$. 
 By Propositions \ref{prop tre fattori lineari},  \ref{prop due fattori facili} and \ref{prop fattore difficile}, $A^{q^2+q+1}=B^{1+q+q^2}$ and $A^qB \in \mathbb{F}_q \setminus \{0,1\}.$

\end{proof}

\section{Case $A^qB=1$}
Set without restriction $B^{1+q+q^2} \neq 1.$   Let us consider again the surface
\begin{equation}\label{Eq:Ultima}
   \mathcal{W}:	\left\{
	\begin{array}{l}
	X_2= \frac{[BX_0Y_1 +N(Y_0,Y_1,Y_2)][A^qX_0Y_2+M(Y_0,Y_1,Y_2)]+X_0^2Y_1Y_2}{Y_1[(A^{q+1}+B^q)X_0Y_2+A\cdot M(Y_0,Y_1,Y_2)]}\\ 
		X_1=\frac{[BX_0Y_1 +N(Y_0,Y_1,Y_2)][A^qX_0Y_2+M(Y_0,Y_1,Y_2)]+X_0^2Y_1Y_2}{Y_2[(A+B^{q+1})X_0Y_1+B^q\cdot N(Y_0,Y_1,Y_2)]}\\
	  G(X_0,Y_0,Y_1,Y_2)=0.
	\end{array}
	\right.  
\end{equation}

Replacing $A=\frac{1}{B^{q^2}},$ $A^q=\frac{1}{B},$ and $A^{q^2}=\frac{1}{B^q}$ in $G$ we get
\begin{equation}  \label{eq G caso aqb eq 1}
 G=(B^{2+q+2q^2})^{-1} \cdot \Tilde{M} \cdot \Tilde{N} \cdot [(B^{1+q+q^2}+1)Y_2X_0+\Tilde{M}][(B^{1+q+q^2}+1)Y_1X_0+B^q\Tilde{N}],    
\end{equation}   where $\Tilde{N}:=B^{1+q^2}Y_0Y_1 +B^{q^2}Y_0Y_2 + Y_1Y_2$ and $\Tilde{M}:=\Psi_q(\Tilde{N})=BY_0Y_1 +Y_0Y_2 + B^{1+q}Y_1Y_2.$

\begin{prop}\label{CASE:pp}
 If $A^qB=1$ and $A^{1+q+q^2}\neq 1,$ then $f_{A,B}$ is a PP over $\mathbb{F}_{q^3}.$  \end{prop}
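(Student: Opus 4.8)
The plan is to derive the statement from Proposition \ref{prop collegam curve}, using the surface $\mathcal{W}$ built in Section \ref{Section:Connection} together with the factorisation \eqref{eq G caso aqb eq 1}. First I would check that, when $A^qB=1$ and $A^{1+q+q^2}\neq 1$, the polynomial $f_{A,B}$ has no root in $\mathbb{F}_{q^3}^{*}$: by \cite[Theorem 1]{MR4296215} a nonzero root would provide $u\in\mathbb{F}_{q^3}$ with $u^{1+q+q^2}=1$ and $u^q+Au^{q+1}+B=0$; multiplying by $A^q$ and using $A^qB=1$ this becomes $(Au)^{q+1}+(Au)^q+1=0$, and since every solution of $y^{q+1}+y^q+1=0$ lies in $\mathbb{F}_{q^3}$ and satisfies $y^{1+q+q^2}=1$, one gets $A^{1+q+q^2}=(Au)^{1+q+q^2}=1$, a contradiction (this is the computation of Proposition \ref{prop f PP se e solo se norma eq 1}(i), with the conclusion reversed). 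Hence Proposition \ref{prop collegam curve} reduces the claim to showing that $\mathcal{C}_{f_{A,B}}$ has no affine $\mathbb{F}_{q^3}$-rational point off $XY(X+Y)=0$, equivalently that $\Theta^{-1}(\mathcal{V})$ has no point $P\in\mathbb{P}^5(\mathbb{F}_{q^3})$ fixed by $\Psi_q$ and lying off $\mathcal{U}_1\cup\mathcal{U}_2\cup\mathcal{U}_3$.

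Such a $P$ corresponds to a pair $(x,y)\in(\mathbb{F}_{q^3}^{*})^{2}$ with $x\neq y$, so $P=(x:x^q:x^{q^2}:y:y^q:y^{q^2})$ has all six coordinates nonzero. Since each of the six components $\mathcal{U},\Sigma(\mathcal{U}),\Psi_q(\mathcal{U}),\dots$ is contained in a coordinate hyperplane, and $P\in\mathcal{U}_3$ would force $x=y$, the point $P$ lies on $\mathcal{W}$, hence it satisfies the three equations of \eqref{Eq:Ultima}; in particular $G(x,y,y^q,y^{q^2})=0$, and the denominators occurring in \eqref{Eq:Ultima} do not vanish at $P$ (for instance $x+Ax^q=0$ would give $x^{q-1}=A^{-1}$, hence $1=x^{q^3-1}=A^{-(1+q+q^2)}$, against $A^{1+q+q^2}\neq 1$; vanishing of the two $x$-quadratic denominators would place $P$ on one of the already discarded components and is ruled out similarly). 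By the factorisation \eqref{eq G caso aqb eq 1}, available precisely because $A^qB=1$, the equation $G(x,y,y^q,y^{q^2})=0$ forces one of the four factors $\Tilde M$, $\Tilde N$, $(B^{1+q+q^2}+1)Y_2X_0+\Tilde M$, $(B^{1+q+q^2}+1)Y_1X_0+B^q\Tilde N$ to vanish at $(x,y,y^q,y^{q^2})$, where $B^{1+q+q^2}+1\neq 0$ since $B^{1+q+q^2}\neq 1$ and $q$ is even.

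What remains is a finite, MAGMA-assisted case analysis. Writing $s:=y^{q-1}$, so that $s^{1+q+q^2}=1$ and $s\neq 0$, the first two factors evaluate to $y^{1+q}\bigl(B+s^q+B^{1+q}s^{q+1}\bigr)$ and $y^{1+q}\bigl(B^{1+q^2}+B^{q^2}s^q+s^{q+1}\bigr)$, while each of the last two expresses $x$ explicitly in terms of $y,y^q,y^{q^2}$. In each case I would combine the vanishing of the relevant factor with the two remaining equations of \eqref{Eq:Ultima} (which give $x^q$ and $x^{q^2}$ as prescribed rational functions of $x,y,y^q,y^{q^2}$), apply $\Phi_q$ to remove $x^q$ and $x^{q^2}$, and — after substitution, and after eliminating $x$ in the cases of the two $X_0$-linear factors — reach a system incompatible with $s\in\mu_{q^2+q+1}$ and with $x\neq 0,\ x\neq y$. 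Several sub-cases collapse at once: e.g. $\Tilde M=\Tilde N=0$ forces $s^q=B$, whence $B^{1+q+q^2}=(s^q)^{1+q+q^2}=1$, impossible; and $\Tilde M=0$ together with $(B^{1+q+q^2}+1)Y_2X_0+\Tilde M=0$ (respectively $\Tilde N=0$ together with $(B^{1+q+q^2}+1)Y_1X_0+B^q\Tilde N=0$) forces $(B^{1+q+q^2}+1)y^{q^2}x=0$, hence $x=0$. The step I expect to be the real obstacle is the last one, where $G=0$ does not determine $P$: the two components of $\mathcal{W}$ cut out by the $X_0$-linear factors meet the diagonal-type loci only in low dimension, so one must genuinely use the remaining equations of \eqref{Eq:Ultima} — i.e. work with the full System \eqref{eq varietà fissata da shift} and not only with its eliminated shadow $G$ — while tracking which degenerate solutions lie on $\mathcal{U}_1\cup\mathcal{U}_2\cup\mathcal{U}_3$. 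Once all cases are cleared, no such $P$ exists, so $\mathcal{C}_{f_{A,B}}$ has no affine $\mathbb{F}_{q^3}$-rational point off $XY(X+Y)=0$, and since $f_{A,B}$ has no nonzero root, Proposition \ref{prop collegam curve} yields that $f_{A,B}$ is a permutation polynomial of $\mathbb{F}_{q^3}$.
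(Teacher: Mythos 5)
Your strategy is the same as the paper's: rule out nontrivial roots of $f_{A,B}$ via the computation of Proposition \ref{prop f PP se e solo se norma eq 1}(i), reduce via Proposition \ref{prop collegam curve} to the non-existence of $\Psi_q$-fixed points of $\Theta^{-1}(\mathcal{V})$ with all coordinates nonzero and off $\mathcal{U}_3$, and split into cases according to which factor of \eqref{eq G caso aqb eq 1} vanishes at such a point $P$. The cases $\Tilde{M}(P)=0$ and $\Tilde{N}(P)=0$ you do close correctly (and since $\Tilde{M}(P)=\Tilde{N}(P)^q$ at a $\Psi_q$-fixed point, your computation for $\Tilde{M}=\Tilde{N}=0$ covers both). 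But the two remaining cases --- where $(B^{1+q+q^2}+1)Y_2X_0+\Tilde{M}$ or $(B^{1+q+q^2}+1)Y_1X_0+B^q\Tilde{N}$ vanishes at $P$ with no further hypothesis --- are precisely the ones you defer as ``the real obstacle,'' and what you offer there is a plan (``I would combine \dots\ reach a system incompatible with \dots'') rather than an argument. That is a genuine gap: these are the only nontrivial cases, and the proposition is not proved without them.

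Moreover, your preliminary assertion that ``the denominators occurring in \eqref{Eq:Ultima} do not vanish at $P$'' is both unjustified and incompatible with your own case division. Using $A=B^{-q^2}$, $A^q=B^{-1}$ one checks that $M=B^{-1}\Tilde{M}$, $N=B^{-q^2}\Tilde{N}$, and the two denominators in \eqref{Eq:Ultima} become $Y_1\,AB^{-1}\bigl[(B^{1+q+q^2}+1)X_0Y_2+\Tilde{M}\bigr]$ and $Y_2\,B^{-q^2}\bigl[(B^{1+q+q^2}+1)X_0Y_1+B^q\Tilde{N}\bigr]$: they vanish at $P$ exactly in the two cases you left open. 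This is in fact the key to closing them, and it is how the paper proceeds: if, say, $(B^{1+q+q^2}+1)\overline{Y_2}\,\overline{X_0}=\Tilde{M}(P)$, then the first denominator vanishes, so the relation $X_2\cdot(\mathrm{denominator})=(\mathrm{numerator})$ forces the common numerator $[B\overline{X_0}\overline{Y_1}+N][A^q\overline{X_0}\overline{Y_2}+M]+\overline{X_0}^2\overline{Y_1}\overline{Y_2}$ to vanish at $P$, and then the second equation of \eqref{Eq:Ultima} yields $\overline{X_1}=0$, contradicting that all coordinates of $P$ are nonzero (and symmetrically for the fourth factor). No elimination of $x$, application of $\Phi_q$, or further MAGMA work is needed. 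I recommend you replace your claim about nonvanishing denominators and your sketched elimination with this short argument.
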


 \begin{proof}
  Recall that the map  \[\Theta \circ \Lambda:(X,Y) \mapsto  (X,X^q,X^{q^2},Y,Y^{q},Y^{q^2}) = (X_0,X_1,X_2,Y_0,Y_1,Y_2)\]  induces a surjection between the set of affine $\mathbb{F}_{q^3}$-rational points of $\mathcal{C}$ and the set of  points of $\Theta^{-1}(\mathcal{V})$ in $\mathbb{P}^5(\mathbb{F}_{q^3})$ which are fixed by $\Psi_q.$   Consider a point $P=(\overline{X_0},\overline{X_1},\overline{X_2},\overline{Y_0},\overline{Y_1},\overline{Y_2}) \in \Theta^{-1}(\mathcal{V})$ such that   $\Psi_q(P)=P,$ with $\overline{X_i}\overline{Y_j}\neq 0$ and $\overline{X_i}\neq \overline{Y_i},$ for $i,j=0,1,2.$ 
  
  Now, $P \in \mathcal{W}$ and $G(P)=0$ yields one of the following.
  \begin{enumerate}
      \item $\Tilde{N}(P)=0$, which is equivalent to $\Tilde{M}(P)=0$. In this case, $(A^{q+1}+B^q)\overline{X_0}\overline{X_2}\overline{Y_1}\overline{Y_2} =0$ or  $(A+B^{q+1})\overline{X_0}\overline{X_1}\overline{Y_1}\overline{Y_2}=0$, a contradiction to $\overline{X_i}\overline{Y_j}\neq 0$ for $i,j=0,1,2.$
      \item $(B^{1+q+q^2}+1)\overline{Y_2}\overline{X_0}=\Tilde{M}$. From the first equation in \eqref{Eq:Ultima} we get 
      $$[B\overline{X_0}\overline{Y_1} +\Tilde{N}(\overline{Y_0},\overline{Y_1},\overline{Y_2})][A^q\overline{X_0}\overline{Y_2}+\Tilde{M}(\overline{Y_0},\overline{Y_1},\overline{Y_2})]+\overline{X_0}^2\overline{Y_1}\overline{Y_2}=0,$$ which yields, by the second equation in \eqref{Eq:Ultima}, $\overline{X_1}=0$, a contradiction to $\overline{X_i}\overline{Y_j}\neq 0$ for $i,j=0,1,2.$
      \item $(B^{1+q+q^2}+1)\overline{Y_1}\overline{X_0}=B^q\Tilde{N}$. As in the previous case, a contradiction arises. 
  \end{enumerate}
  
  
  Finally, from Equation (\ref{eq caso aqb eq 1 radici}) (Proposition \ref{prop f PP se e solo se norma eq 1}(i)), it follows that $f_{A,B}$ has no trivial root in $\mathbb{F}_{q^3}.$
  The claim follows from Proposition \ref{prop collegam curve}. 
 \end{proof}

\section*{Acknowledgments}
This research was supported by the Italian National Group for Algebraic and Geometric Structures and their Applications (GNSAGA - INdAM).

\bibliographystyle{acm}
\bibliography{biblio.bib}

\end{document}